\newtheorem{definition}{Definition}[section]
\newtheorem{lemma}{Lemma}[section]
\newtheorem{thm}{Theorem}[section]
\newtheorem{prop}{Proposition}[section]
\newtheorem{coro}{Corollary}[section]
\newtheorem{remark}{Remark}[section]
\newtheorem*{remark*}{Remark}
\newtheorem*{thm*}{Theorem}
\numberwithin{equation}{section}
\newcommand{\pr}{\partial}
\newcommand{\veps}{\varepsilon}
\newcommand{\lm}[2]{\lim\limits_{#1\to #2}}
\newcommand{\definedas}{\mathrel{\raise.095ex\hbox{\rm :}\mkern-5.2mu=}}
 \newcommand{\asdefined}{\mathrel{=\mkern-5.2mu\raise.095ex\hbox{\rm :}}}
\def\vphi{\varphi}
\def\tr{\textmd{tr}}
\def\dv{\textnormal{div}}
\def\M{\mathscr{M}}
\def\Lap{\Delta}
\def\grad{\nabla}
\def\dint{\displaystyle\int}
\def\R{\mathbb{R}}
\def\R{\mathbb{R}}
\def\S{\Sigma}
\def\({\left(}
\def\){\right)}
\def\={\stackrel{*}{=}}
\def\s{\sigma}
\def\g{\gamma}
\def\ADM{\textnormal{ADM}}
\def\bS{\mathbb{S}}
\def\m{\mathfrak{m}}
\def\k{\kappa}
\begin{document}

\title[Asymptotically flat extensions with charge]{Asymptotically flat extensions with charge}
\author[Alaee]{Aghil Alaee}
\address{Center of Mathematical Sciences and Applications, Harvard University, 
Cambridge MA 02138, USA}
\email{aghil.alaee@cmsa.fas.harvard.edu}

\author[{Cabrera Pacheco}]{Armando J. {Cabrera Pacheco}}
\address{Department of Mathematics, Universit\"at T\"ubingen,  72076 T\"{u}bingen, Germany.}
\email{cabrera@math.uni-tuebingen.de}

\author[Cederbaum]{Carla Cederbaum}
\address{Department of Mathematics, Universit\"at T\"ubingen,  72076 T\"{u}bingen, Germany.}
\email{cederbaum@math.uni-tuebingen.de}

\begin{abstract}
The Bartnik mass is a notion of quasi-local mass which is remarkably difficult to compute. Mantoulidis and Schoen~\cite{M-S} developed a novel technique to construct asymptotically flat extensions of minimal Bartnik data in such a way that the ADM mass of these extensions is well-controlled, and thus, they were able to compute the Bartnik mass for minimal spheres satisfying a stability condition. In this work, we develop extensions and gluing tools,  \`a la Mantoulidis--Schoen, for time-symmetric initial data sets for the Einstein--Maxwell equations that allow us to compute the value of an ad-hoc notion of charged Barnik mass for suitable charged minimal Bartnik data.
\end{abstract}

\maketitle

\thispagestyle{empty}

\section{Introduction and results}

In~\cite{Bartnik-89}, motivated by the notion of electrostatic capacity of a conducting body, Bartnik proposed a new notion of quasi-local mass tailored to open sets $\Omega$ in time-symmetric, asymptotically flat, initial data sets for the Einstein equations, satisfying the dominant energy condition. This notion of quasi-local mass is known as the \emph{Bartnik mass}. We recall that initial data sets for the Einstein equations correspond to spacelike slices of spacetimes and are described by a Riemannian $3$-manifolds $(M,\g)$ together with a $(0,2)$-tensor field $K$ playing the role of the second fundamental form. The dominant energy condition implies certain properties of $(M,\g,K)$. When time-symmetry ($K=0$) is assumed, the dominant energy condition reduces to the scalar curvature $R(\g)$ being bounded below by $0$. 

We will consider the boundary version of Bartnik's mass, in which it is defined purely in terms of the boundary geometry $\S \definedas \pr \Omega$ of $\Omega$. Hence, given \emph{Bartnik data} $(\S \cong \bS^2,g,H)$, where $g$ is a Riemannian metric on $\S$ and $H \geq 0$ is a smooth function on $\S$, we consider the following set $\mathcal{A}$ of \emph{admissible extensions}: an asymptotically flat Riemannian $3$-manifold $(M,\g)$, with non-negative scalar curvature, is an admissible extension of $(\S \cong \bS^2,g,H)$ if $\pr M$ is isometric to $(\S,g)$ and has mean curvature $H$ as a submanifold of $M$; moreover, we require $\pr M$ to be outer-minimizing. Then we define the Bartnik mass of $(\S \cong \bS^2,g,H)$ as
\begin{equation*}
\m_B(\S \cong \bS^2,g,H) \definedas \inf \{ m_{\ADM}(M,\g) \, | \, (M,\g) \in \mathcal{A}  \}.
\end{equation*}
The Bartnik mass is remarkably difficult to compute. However, recall that the Riemannian Penrose inequality states that for an asymptotically flat Riemannian manifold $(M^3,\g)$ with non-negative scalar curvature and outer-minimizing minimal boundary $\pr M$, one has
\begin{equation}\label{eq-AFRPI}
m_{\ADM}(M,\g) \geq \sqrt{\frac{|\pr M|}{16 \pi}}, 
\end{equation}
where $|\pr M|$ denotes the area of the boundary $\pr M$. 

Equality holds if and only if $(M^3,\g)$ is isometric to a spatial Schwarzschild manifold. The Riemannian Penrose inequality was proven when $\pr M$ has a single connected component by Huisken and  Ilmanen~\cite{HI} using a weak formulation of the inverse mean curvature flow, motivated by an argument by Geroch in~\cite{Geroch}. Bray~\cite{Bray} proved it allowing $\pr M$ to be disconnected, using the conformal flow.

It follows readily that the \emph{Hawking mass} of $(\S\cong \bS^2,g,H)$,
\begin{equation*}
\m_H (\S\cong \bS^2,g,H)\definedas \sqrt{\frac{|\S|}{16 \pi}}\(1 - \frac{1}{16\pi} \int_{\S} H^2 \, d\s  \),
\end{equation*}
where $|\S|$ denotes the area of $\S$ and $d\s$ is the area form of $\S$ with respect to $g$, provides a lower bound for the Bartnik mass when $H \equiv 0$:
\begin{equation*}
\m_H(\pr M,g,H \equiv 0) = \sqrt{\frac{|\pr M|}{16 \pi}} \leq \m_B(\pr M,g,H \equiv 0).
\end{equation*}

Mantoulidis and Schoen~\cite{M-S} computed the Bartnik mass for \emph{minimal Bartnik data} $(\S\cong \bS^2,g,H\equiv 0)$, when $g$ satisfies that the first eigenvalue $\lambda_1$ of the operator $-\Lap_g + K(g)$, where $K(g)$ denotes the Gaussian curvature of $g$, is strictly positive. To do so, they developed a novel technique to handcraft asymptotically flat extensions of such minimal Bartnik data, in such a way that the ADM mass can be made arbitrarily close to the optimal value in~\eqref{eq-AFRPI}.

In the context of solutions to the Einstein--Maxwell equations, considering time-symmetric initial data sets amounts to study Riemannian manifolds $(M^3,\g)$ together with a vector field $E$ on $M$, acting as an electric field. The dominant energy condition then translates to requiring $R(\g) \geq 2|E|^2_{\g}$. In this setting, the charged Riemannian Penrose inequality states~\cite{Jang,D-K} that for an asymptotically flat Riemannian manifold $(M^3,\g)$ with boundary $\pr M$, assumed to be minimal and outer-minimizing, and a vector field $E$ acting as an electric field satisfying $R(\g) \geq 2 |E|^2_{\g}$, we have
\begin{equation*}
m_{\ADM}(M,\g) \geq \sqrt{\frac{|\pr M|}{16 \pi}} + \sqrt{\frac{\pi}{|\pr M|}}Q^2,
\end{equation*}
where $Q$ denotes the total charge of the time-symmetric initial data set. Equality holds if and only if $(M^3,\g)$ is isometric to a spatial Reissner--Nordstr\"om manifold (see Section~\ref{sec-intro} for the relevant definitions).

After defining an appropriate set of admissible extensions for \emph{charged Bartnik data}, that is for 4-tuples $(\S \cong \bS^2,g,H,Q)$, where $g$ is a Riemannian metric on $\S$, $H \geq 0$ is a smooth function on $\S$, and $Q \in \R$, we formulate an ad-hoc version of Bartnik mass in this context, denoted by $\m_B^{CH}$, tailored to time-symmetric initial data sets for the Einstein--Maxwell equations, satisfying the dominant energy condition. Our main result is Theorem \ref{thm-main} which can be stated --- somewhat imprecisely for now --- as follows.
\newpage
\begin{thm*} 
Let $(\S \cong \bS^2,g_o,H_o \equiv 0,Q_o)$ be minimal charged Bartnik data satisfying $\lambda_1 \definedas \lambda_1(-\Lap_{g_o}+ K(g_o))>0$, where $\lambda_1$ denotes the first eigenvalue of the operator $-\Lap_{g_{o}} + K(g_o)$, and $K(g_o)$ denotes the Gaussian curvature of $g_o$. Let $4 \pi r_o^2 \definedas |\S|$. Suppose that 
\begin{equation*}
Q_o^2 < r_o^2
\end{equation*}
and assume furthermore that
\begin{equation*} 
\kappa > \frac{Q_o^2}{r_o^4},   
\end{equation*}
where $\kappa$ is a real number depending only on $(\S,g_{o})$. Then
\begin{equation*}
\m_B^{CH}(\S \cong \bS^2,g_o,H_o \equiv 0,Q_o) = \sqrt{\frac{|\S|_{g_o}}{16 \pi}} + \sqrt{\frac{\pi}{|\S|_{g_o}}}Q_o^2.
\end{equation*}
\end{thm*}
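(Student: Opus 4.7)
The lower bound
\begin{equation*}
\m_B^{CH}(\S \cong \bS^2, g_o, H_o \equiv 0, Q_o) \;\geq\; \sqrt{\tfrac{|\S|_{g_o}}{16\pi}} + \sqrt{\tfrac{\pi}{|\S|_{g_o}}}\,Q_o^2
\end{equation*}
follows by applying the charged Riemannian Penrose inequality to any admissible extension $(M,\g,E)$: by definition of the admissible class, $\pr M$ is isometric to $(\S,g_o)$ and the total charge equals $Q_o$, so the right-hand side is a lower bound for $m_{\ADM}(M,\g)$ and hence for the infimum defining $\m_B^{CH}$. The substance of the theorem is therefore the matching upper bound: for every $\veps > 0$, one must produce an admissible extension whose ADM mass exceeds the Reissner--Nordstr\"om value $\sqrt{|\S|_{g_o}/(16\pi)}+\sqrt{\pi/|\S|_{g_o}}\,Q_o^2 = r_o/2 + Q_o^2/(2 r_o)$ by at most $\veps$.

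The plan is to construct such extensions via a charged analogue of the Mantoulidis--Schoen scheme. Using $\lambda_1>0$, one first produces a smooth path $\{g(t)\}_{t\in[0,1]}$ of area-$4\pi r_o^2$ metrics on $\S\cong\bS^2$ connecting $g_o$ to a round metric, lying entirely in the open set where the relevant first eigenvalue remains positive. From this path one assembles a warped collar
\begin{equation*}
(M_{\mathrm{col}}, \g_{\mathrm{col}}) \definedas \bigl([0,T]\times\S,\; dt^2 + u(t)^2 g(t)\bigr),
\end{equation*}
with $u\colon [0,T]\to(0,\infty)$ chosen so that $u'(0)=0$, which renders the inner boundary minimal and isometric to $(\S,g_o)$ after a suitable normalization, while $u(T)$ and $u'(T)$ are tuned to prepare the outer gluing. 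On $M_{\mathrm{col}}$ we install an electric field $E = f(t)\,\pr_t$ in which $f$ is determined by the Maxwell constraint $\dv E = 0$ and normalized so that its flux through each slice $\{t\}\times\S$ equals $Q_o$.

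Substituting $E$ and $\g_{\mathrm{col}}$ into the charged dominant energy condition $R(\g_{\mathrm{col}}) - 2|E|^2_{\g_{\mathrm{col}}} \geq 0$ converts it into a scalar differential inequality for $u$ in which the Mantoulidis--Schoen curvature expression associated with the path $\{g(t)\}$ must dominate an inhomogeneous contribution of order $Q_o^2/(u^4 r_o^4)$. The constant $\kappa$ in the hypothesis depends only on $(\S,g_o)$ and quantifies the best available positivity of this curvature expression along an optimally chosen path, so that the assumption $\kappa > Q_o^2/r_o^4$ furnishes exactly the margin required for an admissible $u$ to exist. The outer end $\{T\}\times\S$ is thereby arranged to be a round sphere, to which one attaches a spatial Reissner--Nordstr\"om exterior with charge $Q_o$ and mass $m$ slightly larger than $r_o/2 + Q_o^2/(2 r_o)$; the subextremality $m > |Q_o|$ needed to produce a non-degenerate outer horizon near which such a matching is possible is supplied by the standing hypothesis $Q_o^2 < r_o^2$.

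The final step glues $(M_{\mathrm{col}}, \g_{\mathrm{col}}, E)$ to the chosen Reissner--Nordstr\"om exterior across a thin transition neck by bending and mollifying, in the spirit of~\cite{M-S}, in such a way that $R \geq 2|E|^2$ persists throughout. Letting $T$ and the free parameters tend to their limiting values drives the resulting ADM mass arbitrarily close to $\sqrt{|\S|_{g_o}/(16\pi)}+\sqrt{\pi/|\S|_{g_o}}\,Q_o^2$, and the outer-minimizing property of $\pr M$ is then obtained as in~\cite{M-S} from the smallness of boundary areas along the collar. The principal obstacle --- and the technical heart of the argument --- is preserving the charged dominant energy condition simultaneously through the warped collar and through the gluing region; identifying the correct $\kappa$, constructing a compatible profile $u$, and smoothing across the junction without destroying $R \geq 2|E|^2$ are where the bulk of the work will lie, and the two quantitative hypotheses $Q_o^2 < r_o^2$ and $\kappa > Q_o^2/r_o^4$ are precisely the ingredients that make the entire scheme go through.
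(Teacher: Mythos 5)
Your architecture coincides with the paper's: the lower bound is exactly the charged Riemannian Penrose inequality applied to an arbitrary admissible extension, and the upper bound is a charged Mantoulidis--Schoen construction (area-preserving path of metrics ending at a round metric, a collar carrying a divergence-free radial electric field of flux $Q_o$ through each slice, then gluing to a sub-extremal Reissner--Nordstr\"om exterior of mass slightly above $r_o/2+Q_o^2/(2r_o)$). As an outline this is faithful to the paper; but two points deserve attention.

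First, the collar you actually display, $dt^2+u(t)^2g(t)$ with unit lapse, would not satisfy $R\geq 2|E|^2$ for general $g_o$ with merely $\lambda_1(-\Lap_{g_o}+K(g_o))>0$: with unit lapse the scalar curvature of the collar contains the pointwise term $2K(g(t))/u(t)^2$, which can be negative somewhere, and no choice of warping $u(t)$ or reparametrization of $t$ repairs this. The construction works only because the lapse is taken to be a large multiple $A$ of the first eigenfunction $u(t,\cdot)$ of $-\Lap_{g(t)}+K(g(t))$, so that the term $-\Lap v+Kv$ in the scalar curvature formula becomes $\lambda_1(t)\,v$; the hypothesis $\kappa>Q_o^2/r_o^4$ then gives $\lambda_1(t)-Q_o^2/r_o^4>0$ uniformly in $t$, and taking $A$ large absorbs the remaining $O(A^{-2})$ error terms together with the $|E|^2$ contribution. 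This is precisely where $\kappa$ enters, and it is not an optimized quantity: it is the infimum of $\lambda_1$ along the specific rescaled, reparametrized path produced from the uniformization argument. Second, you defer the entire gluing step, which is the quantitative heart of the paper: a gluing lemma for rotationally symmetric charged metrics whose hypotheses require $f(b)>|Q|$ and $\m_H^{CH}(\S_b)>|Q|$ at the junction, a bending lemma turning the Reissner--Nordstr\"om identity $R=2|E|^2$ into a strict inequality near the neck, and a computation showing that the charged Hawking mass at the top of the collar exceeds $|Q_o|$ yet stays below the target mass $m$ as the collar parameter $\veps\to 0$. The hypothesis $Q_o^2<r_o^2$ is used there (to guarantee $\m_H^{CH}(\S_1)>\m_H^{CH}(\S_0)\geq|Q_o|$), not only for sub-extremality of the exterior as you suggest. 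In short: right strategy, but the displayed collar ansatz is the wrong one, and the steps you flag as ``where the bulk of the work will lie'' are indeed the proof.
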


The threshold $\kappa$ appearing in this Theorem will be given by the infimum of the first eigenvalue of the operator $-\Lap + K$ along a precise smooth path of metrics on $\S$ connecting the metric $g_o$ to a round metric, see Section~\ref{sec-minimal}. The proof of our main Theorem~\ref{thm-main} is inspired by the Mantoulidis--Schoen construction~\cite{M-S}. This construction has proven to be useful to obtain Bartnik mass estimates. Relevant and related results include those in~\cite{CM,M-X,CCMM,CCM,MWX}; for a survey on this topic see~\cite{CC-survey}.

\begin{remark*}
In this work, we construct time-symmetric initial data sets for the Einstein--Maxwell equations. To relate these initial data sets with horizon inner boundary to the Cauchy problem in general relativity, suppose that they can be suitably regularly geodesically completed by a fill-in consisting of a Riemannian ball together with a source-free electric field in such a way that the dominant energy condition is satisfied in the completion. The evolution result of Choquet-Bruhat and Friedrich~\cite{CB-F} for compact charged dust matter should then apply to the completed initial data set and lead to short time existence of a unique spacetime satisfying the Einstein--Maxwell equations for charged dust. We do not know whether the procedure to construct initial data sets presented here can be carried out in a way that gives an electro-vacuum solution, as it is in~\cite{M-S}.
\end{remark*}

This article is organized as follows. In Section~\ref{sec-intro}, we introduce basic notions for time-symmetric initial data sets for the Einstein--Maxwell equations and formulate the definition of a boundary charged Bartnik mass. In Section~\ref{sec-collars}, we study collar extensions for minimal charged Bartnik data, define the electric fields to consider along these collar extensions, and describe their interaction. Independent gluing tools for rotationally symmetric Riemannian manifolds with electric fields are obtained in Section~\ref{sec-gluing-tools}. The main theorem is then obtained in Section~\ref{sec-minimal} as a corollary from a more general result in the same spirit as Mantoulidis--Schoen's result.

\bigskip

\paragraph*{\emph{Acknowledgements.}} We extend sincere thanks to Alejandro Pe\~nuela D\'iaz for a thorough reading of the draft of this paper. All three authors thank the Erwin Schr\"{o}dinger Institute for hospitality and support during our visits in 2017 in the context of the program \emph{Geometry and Relativity} and the Banff International Research Station for hospitality and support during the workshop \emph{Asymptotically hyperbolic manifolds} in 2018. AA was supported by a post-doctoral fellowship from the Natural Sciences and Engineering Research Council of Canada (NSERC). AJCP and CC thank the Carl Zeiss foundation for its generous support. The work of CC is supported by the Institutional Strategy of the University of T\"ubingen (Deutsche Forschungsgemeinschaft, ZUK 63).

\section{Time-symmetric initial data sets for the Einstein-Maxwell equations} \label{sec-intro}

Consider a time-symmetric initial data set for Einstein--Maxwell equations, that is, a triplet $(M,\g,E)$, where $(M,\g)$ is a Riemannian manifold and $E$ is a vector field, to be interpreted as an electric field. This corresponds to considering a spacelike slice of a spacetime satisfying the Einstein--Maxwell equations with second fundamental form $K \equiv 0$. We will always assume that the charge density vanishes, that is, $\dv_{\g} E=0$, and thus charges are conserved, and that the magnetic field $B$ vanishes. By the Gauss--Codazzi equations, it then follows that $(M,\g,E)$ satisfies
\begin{align*}
R(\g)  -2|E|^2_{\g} &= 16 \pi \mu, \\
\dv_{\g} E&=0,
\end{align*}
where $\mu$ is the energy density of the non-electromagnetic matter fields. Additionally, we will always assume that the dominant energy condition $\mu \geq 0$ holds. In particular, this implies that we will be interested in Riemannian manifolds with scalar curvature bounded below by $2|E|^2_{\g}$.

\begin{definition}\label{def-elec-mfd}
We say that $(M,\g,E)$ is an \emph{electrically charged Riemannian manifold} if $(M,\g)$ is a Riemannian manifold and $E$ a smooth vector field on $M$, to  be interpreted as an electric field. We say that the electrically charged Riemannian manifold $(M,\g,E)$ \emph{satisfies the dominant energy condition} if
\begin{equation*}
R(\g) \geq 2 |E|^2_{\g}.
\end{equation*}
\end{definition}

An electrically charged Riemannian manifold $(M,\g,E)$ is \emph{asymptotically flat} if $(M,\g)$ is asymptotically flat and $E \to 0$ asymptotically and suitably fast in the asymptotically flat end. In this context we will sometimes simply refer to the vector field $E$ in Definition~\ref{def-elec-mfd} as the \emph{electric field}. 

The \emph{total charge} of an asymptotically flat, electrically charged Riemannian manifold $(M,\g,E)$ is given by
\begin{equation} \label{total-charge}
\mathbf{Q}_{(M,\g,E)} \definedas  \lm{r}{\infty} \frac{1}{4\pi} \int_{\mathbb{S}^2_r} \g(E,\nu) \, d\s_r,
\end{equation}
where $\mathbb{S}^2_{r}$ is the coordinate sphere with radius $r$ and outer unit
normal $\nu$ and $d\s_r$ denotes its area form (see, for example,~\cite{D-GH} and the references therein).

Using Stokes' theorem and our assumption that the charge density $\dv_{\g}E$ vanishes, we can define the \emph{total charge contained in $\S \subset M$}, where $\S$ is a closed  surface (homologous to the 2-sphere $\mathbb{S}^2_{\infty} \definedas \lm{r}{\infty} \mathbb{S}^2_{r}$ in the asymptotically flat end) as
\begin{equation} \label{total-charge-surface}
\mathbf{Q}_{\S} \definedas  \frac{1}{4\pi} \int_{\S} \gamma(E,\nu) \, d\sigma,
\end{equation}
where $d\s$ denotes the area form on $\S$ with respect to the induced metric. Note that by our assumptions this quantity is the same for all 2-surfaces homologous to $\S$.

Recall that the Reissner--Nordstr\"om spacetime is a static solution to the Einstein--Maxwell field equations, representing a gravitational field surrounding a static spherical black hole with charge. The \emph{spatial Reissner--Nordstr\"om manifold with charge $Q \in \R$ and mass $m \geq |Q|$} arises as the time-symmetric slice $\{ t=0 \}$ of the Reissner--Nordstr\"om spacetime, and can be described as the Riemannian manifold $M^{RN}_{m,Q}=(r_+,\infty) \times \bS^2$, with metric~$\gamma_{m,Q}$ given by
\begin{equation} \label{RN}
\g_{m,Q} \definedas \( 1- \frac{2m}{r} + \frac{Q^2}{r^2} \)^{ -1}dr^2 + r^2 g_*,
\end{equation}
where $r_+ = m + \sqrt{m^2 - Q^2}$, and $g_*$ denotes the standard round metric on $\bS^2$. The metric~$\gamma_{m,Q}$ is not defined when $r=r_+$, but as we will see later, this is just a coordinate singularity. The \emph{electric field $E_{m,Q}$} on $(M^{RN}_{m,Q},\g_{m,Q})$ given by
\begin{equation}
E_{m,Q} \definedas \frac{Q}{r^2}\sqrt{1- \frac{2m}{r} + \frac{Q^2}{r^2}}\, \pr_r
\end{equation}
satisfies the source-free condition $\dv_{\g_{m,Q}} E_{m,Q}=0$ and the time-symmetric, electro-vacuum constraint equation $R(\g_{m,Q})=2|E_{m,Q}|^2_{\g_{m,Q}}$. In particular, $(M^{RN}_{m,Q},\g_{m,Q},E_{m,Q})$ is an asymptotically flat, electrically charged Riemannian manifold satisfying the dominant energy condition. If $m=|Q|$, the spacetime is called an \emph{extremal} Reissner--Nordstr\"om black hole and its initial data set $(M,\gamma_{m,Q},E_{m,Q})$ has a cylindrical end geometry, which is often thought of as an infinite `throat' region. This extreme geometry appears as the rigidity case in the area-charge inequality and the positive mass theorem for charge black holes (for a comprehensive review, see~\cite{D-GH}). In this work, we will only be interested in the case when $m > |Q|$ which is called the \emph{sub-extremal} case.

By performing the change of variables
\begin{equation*}
s(r) \definedas \int_{r_+}^{r}  \( 1- \frac{2m}{t} + \frac{Q^2}{t^2} \)^{ -1/2} \, dt,
\end{equation*}
we can extend the (sub-extremal) metric~$\gamma_{m,Q}$ given in~\eqref{RN} to include the \emph{horizon boundary} $\{ s=0 \}$, and write it as
\begin{equation*}
\g_{m,Q} = ds^2 + u_{m,Q}(s)^2 g_*
\end{equation*}
on $[0,\infty) \times \bS^2$.
\newpage
\noindent The radial profile function $u_{m,Q}\colon[0,\infty) \to [r_+,\infty)$ satisfies
\begin{enumerate}
\item $u_{m,Q}(0)=r_+$,
\item $u_{m,Q}'(s)= \( 1- \frac{2m}{u_{m,Q}(s)} + \frac{Q^2}{u_{m,Q}(s)^2} \)^{1/2}$, and
\item $u_{m,Q}''(s)=\frac{m u_{m,Q}(s) - Q}{u_{m,Q}(s)^3}$.
\end{enumerate}
The electric field $E_{m,Q}$ in these coordinates is given by 
\begin{equation*}
E_{m,Q}=\frac{Q}{u_{m,Q}^{2}} \pr_s,
\end{equation*}
and we have 
\begin{equation} \label{scalar-RN}
R(\g_{m,Q})=2|E_{m,Q}|^2_{\g_{m,Q}} = \frac{2 Q^2}{u_{m,Q}^4}\,.
\end{equation}

The well-known Penrose inequality relates the total mass of a spacetime with the area of the black holes contained in it, its general form is still an open problem. At the level of time-symmetric initial data sets for the Einstein--Maxwell equations, the following version was first discussed by and proved by Jang in~\cite{Jang}, assuming smoothness of the solution of the inverse mean curvature flow equation (IMCF). This assumption is now superfluous if one exchanges the smooth IMCF equation for Huisken--Ilmanen's weak IMCF formulation~\cite{HI}, see~\cite{Mars}. The corresponding rigidity statement was proved by Disconzi and Khuri in~\cite{D-K}. A closed $2$-surface in an asymptotically flat Riemannian manifold $(M,\g)$ is called a \emph{horizon} if its mean curvature vanishes. We say that it is \emph{outer-minimizing} if it minimizes area among all surfaces enclosing it and homologous to it. In the terminology introduced above, these results can be summarized as follows.

\begin{thm}[Riemannian Penrose inequality with charge] \label{thm-RPI}
Let $(M,\g,E)$ be an asymptotically flat, electrically charged Riemannian $3$-manifold  with a connected outer-minimizing horizon boundary $\pr M$. Assume further that the charge density is zero, i.e., $\dv_{\g} E=0$, and that the dominant energy condition $R(\g) \geq 2 |E|^2_{\g}$ is satisfied. Then, 
\begin{equation}\label{RPI}
m_{\ADM}(M,\g) \geq \sqrt{\frac{A}{16 \pi}} + \sqrt{\frac{\pi}{A}}Q^2,
\end{equation}
where $Q\definedas \mathbf{Q}_{(M,\g,E)}$. Equality holds if and only if $(M,\g,E)$ is isometric to a sub-extremal Reissner-Nordstr\"om manifold.
\end{thm}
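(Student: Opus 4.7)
The plan is to prove the inequality via the weak inverse mean curvature flow of Huisken--Ilmanen~\cite{HI} starting at $\pr M$, combined with a suitably charged Hawking-type functional. I would define the \emph{charged Hawking mass}
\begin{equation*}
\m_H^{ch}(\S) \definedas \sqrt{\frac{|\S|}{16\pi}}\(1 - \frac{1}{16\pi}\int_{\S} H^2 \, d\s\) + \sqrt{\frac{\pi}{|\S|}}\,\mathbf{Q}_{\S}^2,
\end{equation*}
and evaluate it at the endpoints of the flow $\{\S_t\}_{t\geq 0}$. At $t=0$, since $\pr M$ is a horizon ($H\equiv 0$) and carries the total charge $Q$ by homology and $\dv_{\g}E=0$, one directly has $\m_H^{ch}(\pr M) = \sqrt{A/(16\pi)} + \sqrt{\pi/A}\,Q^2$, precisely the right-hand side of~\eqref{RPI}. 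As $t\to\infty$, Huisken--Ilmanen's asymptotic analysis of the weak IMCF gives $\m_H(\S_t) \to m_{\ADM}(M,\g)$, while the charge contribution $\sqrt{\pi/|\S_t|}\,Q^2 \to 0$ since $|\S_t| = |\pr M|\,e^t \to \infty$.

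The heart of the proof is the monotonicity $\frac{d}{dt}\m_H^{ch}(\S_t)\geq 0$. Running Geroch's calculation (first smoothly, then justified in the weak formulation), together with Gauss--Bonnet for topological 2-spheres and the dominant energy condition $R(\g)\geq 2|E|^2_{\g}$, yields
\begin{equation*}
\frac{d}{dt}\m_H(\S_t) \geq \frac{|\S_t|^{1/2}}{(16\pi)^{3/2}} \int_{\S_t} 2|E|^2_{\g}\, d\s.
\end{equation*}
On the other hand, since $\frac{d}{dt}|\S_t|=|\S_t|$ along the IMCF, a direct computation gives $\frac{d}{dt}\sqrt{\pi/|\S_t|}\,Q^2 = -\tfrac{1}{2}\sqrt{\pi/|\S_t|}\,Q^2$. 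Applying Cauchy--Schwarz to the flux integral in~\eqref{total-charge-surface},
\begin{equation*}
(4\pi Q)^2 = \(\int_{\S_t}\g(E,\nu)\,d\s\)^2 \leq |\S_t|\int_{\S_t}|E|^2_{\g}\,d\s,
\end{equation*}
one checks that the positive contribution to $\frac{d}{dt}\m_H$ exactly balances the negative contribution from the charge correction, producing the desired monotonicity.

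The main obstacle will be the rigorous justification in the weak IMCF setting, particularly across jump times when $\S_t$ is replaced by its strictly outward-minimizing hull. One needs that both the area and the enclosed charge are preserved at jumps (the latter using $\dv_{\g}E=0$ and homology invariance), while $\int H^2 \, d\s$ can only decrease across a jump (since the replacement surface is a minimal surface in the complement), so that $\m_H^{ch}$ is lower semi-continuous. The outer-minimizing hypothesis on $\pr M$ ensures that no jump occurs at $t=0$, which is precisely where it is needed.

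Finally, for rigidity, if equality holds in~\eqref{RPI} then the monotone quantity $\m_H^{ch}$ must be constant along the flow, forcing equality at every intermediate step. Equality in Cauchy--Schwarz forces $E$ to be parallel to $\nu$ on each $\S_t$; equality in the Geroch computation forces each $\S_t$ to be umbilical with $R(\g)=2|E|^2_{\g}$ and the flow to be smooth; and the leaves must then be round spheres of strictly increasing area. Piecing these together identifies $(M,\g,E)$ with a sub-extremal Reissner--Nordstr\"om manifold, following the argument of Disconzi--Khuri~\cite{D-K}.
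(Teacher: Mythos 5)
The paper does not prove Theorem~\ref{thm-RPI}: it is quoted as a known result, with the inequality attributed to Jang~\cite{Jang} (smooth IMCF), its rigorous version to Huisken--Ilmanen's weak IMCF~\cite{HI} as explained in~\cite{Mars}, and the rigidity statement to Disconzi--Khuri~\cite{D-K}. Your proposal reconstructs exactly the argument of those references, so there is no divergence of method to report; I will only comment on its soundness. The quantitative heart of your sketch is correct: your functional agrees with the paper's charged Hawking mass~\eqref{eq-c-hawking} since $\sqrt{|\S|/16\pi}\cdot 4\pi Q^2/|\S| = \sqrt{\pi/|\S|}\,Q^2$; with $\tfrac{d}{dt}|\S_t|=|\S_t|$ the charge term decays at rate $\tfrac12\sqrt{\pi/|\S_t|}\,Q^2$, while H\"older applied to the flux integral gives $\int_{\S_t}|E|^2_{\g}\,d\s \geq 16\pi^2Q^2/|\S_t|$, so the Geroch gain $\frac{\sqrt{|\S_t|}}{(16\pi)^{3/2}}\int_{\S_t}2|E|^2_{\g}\,d\s$ is at least $\tfrac12\sqrt{\pi/|\S_t|}\,Q^2$ and the total quantity is monotone, with the correct endpoint values (here connectedness of $\pr M$ enters through $\chi(\S_t)\le 2$ in the Geroch formula, and homology invariance of $\mathbf{Q}_{\S_t}$ uses $\dv_{\g}E=0$). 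The two places where your sketch is genuinely thinner than the cited literature are (a) the weak-flow justification, which you correctly flag as the main technical burden and which is precisely the content of~\cite{HI,Mars}, and (b) the rigidity: constancy of a quantity that is only monotone in a weak (distributional, possibly jumping) sense does not immediately give the pointwise equalities you list, since the weak solution need not be smooth and ``equality at every intermediate step'' requires a separate argument --- this is exactly why the equality case needed the independent treatment of Disconzi--Khuri~\cite{D-K}, to which you rightly defer.
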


Given an electrically charged Riemannian $3$-manifold $(M,\g,E)$ and a closed $2$-surface $\S \subset M$, following~\cite{D-K} we define the \emph{charged Hawking mass} of $\S$, $\m_{H}^{CH}(\S)$ as
\begin{equation} \label{eq-c-hawking}
\m_{H}^{CH}(\S) \definedas \sqrt{\frac{|\S|}{16 \pi}}\( 1 + \frac{4 \pi Q^2}{|\S|} - \frac{1}{16 \pi}\int_{\S} H^2 \, d\s \),
\end{equation}
where $|\S|$ denotes the area of $\S$ with respect to the metric induced on $\S$ by $\g$, $Q$ is the charge contained in $\S$, $H$ is the mean curvature of $\S$ and $\s$ is the area form on $\S$ with respect to the metric induced by $\g$. Notice that when $H=0$ (i.e., at a horizon), we recover the right hand side of the Penrose inequality~\eqref{RPI}. 

\subsection{Charged Bartnik mass}

We now proceed to formulate an ad-hoc definition of \emph{charged Bartnik mass}. We start by defining what we will be referring to as \emph{charged Bartnik data}.

\begin{definition} 
A 4-tuple $(\S\cong \bS^2,g,H,Q)$ is called \emph{charged Bartnik data}, if $g$ is a Riemannian metric on $\S$, $H \geq 0$ is a smooth function on $\S$, and $Q$ is a real number. 
\end{definition}

Notice that for such data, it makes sense to consider its charged Hawking mass as a number depending only on the given data via~\eqref{eq-c-hawking}, as
\begin{equation}
\m_{H}^{CH}(\S,g,H,Q) \definedas \sqrt{\frac{|\S|_g}{16 \pi}}\( 1 + \frac{4 \pi Q^2}{|\S|_g } -  \frac{1}{16 \pi}\int_{\S} H^2 \, d\s \).
\end{equation}
Mimicking~\cite{Bartnik-89}, we call a triplet $(M,\gamma,E)$ an \emph{admissible extension} of the charged Bartnik data $(\S\cong \bS^2,g,H,Q)$, if $(M,\gamma)$ is an asymptotically flat Riemannian manifold with outer-minimizing boundary $\pr M$ isometric to $(\S,g)$ and with mean curvature $H$, $E$ is a smooth vector field on $M$, interpreted  as an electric field, such that
\begin{equation}
R(\g)\geq 2|E|^2_{\g},\qquad \text{div}_{\g}E=0,
\end{equation}
and $Q$ is the total charge of $\pr M$ defined by~\eqref{total-charge-surface}. Denoting the set of admissible extensions of $(\S \cong \bS^2,g,H,Q)$ by $\mathcal{A}_{g,H,Q}$, the charged Bartnik mass is naturally defined as
\begin{equation}
\mathfrak{m}_{B}^{CH}(\S,g,H,Q) \definedas \inf \{m_{\ADM}(M,\gamma) \, | \,  (M,\gamma,E) \in \mathcal{A}_{g,H,Q} \}.
\end{equation}

Recall that in the usual setting, one could require instead of the outer-minimizing boundary condition that there are no minimal surfaces in the extensions (homologous to the boundary), except possibly the boundary. For this work, it makes no difference which condition is chosen since the extensions considered here will satisfy both.

It follows readily from Theorem~\ref{thm-RPI} that in the case of \emph{minimal charged Bartnik data}  $(\S \cong \bS^2,g,H\equiv 0,Q)$, we have
\begin{equation*}
\m_H^{CH}(\S,g,H \equiv 0, Q) \leq \m_B^{CH}(\S,g,H \equiv 0, Q).
\end{equation*}
\vspace{1ex}

Let us remark that the definition of charged Bartnik mass used here is an ad hoc analogy to the boundary version of the definition of Bartnik mass in the uncharged context. To formulate in detail a definition for the Bartnik mass for a connected domain $\Omega$ in an initial data set for the Einstein--Maxwell equations as in~\cite{Bartnik-89}, it would be necessary to consider extensions satisfying a weak dominant energy condition (in the sense of low regularity). The required analysis to do so would lead too far from the scope and goals of this article.
\newpage
\section{Collar extensions with an electric field} \label{sec-collars}
Given charged Bartnik data $(\S \cong \bS^2,g,H,Q)$, we will be interested in constructing appropriate ``collar extensions''. To be precise, a \emph{collar extension} will be a metric on $[0,1] \times \S$ of the form
\begin{equation} \label{collar}
\g = v(t,\cdot)^2 dt^2 + F(t)^2 g(t)
\end{equation}
together with a suitable electric field $E\parallel \partial_{t}$, where $v$ is a positive smooth function on $[0,1] \times \bS^2$, $F$ is a positive smooth function on $[0,1]$, and $\{ g(t) \}_{0 \leq t \leq 1 }$ is a suitable smooth path of metrics connecting $g$ to a round metric on $\bS^2$. The functions and the path of metrics are chosen so that the level $t=0$ is isometric to the given charged Bartnik data.

In this work, we consider the case when the data $(\S \cong \bS^2,g,H \equiv 0,Q)$ is prescribed on the boundary of the collar extension (as a minimal surface). The collar extensions are constructed inspired in the work of Mantoulidis and Schoen~\cite{M-S}. 

The smooth path of metrics $\{ g(t) \}_{0 \leq t \leq 1}$ is chosen so that it preserves a curvature condition of the given data (see Lemmas~\ref{path-lambda}) and, in addition, it is required to satisfy the following conditions:
\begin{enumerate}[(i)]
\itemsep0.5em
\item $g(0)=g$ and $g(1)$ is round,
\item $g'(t)=0$ for $t \in [\theta,1]$ where $0 < \theta < 1$, and
\item $\tr_{g(t)} g'(t)=0$ for $t \in [0,1]$, i.e., the area-form is preserved.
\end{enumerate}
Notice that by (iii), in particular the area is also preserved, so $|\S|_{g(t)} = 4\pi  r_o^2$, for some $r_o > 0$, which is called the area-radius. As a consequence $g(1)=r_o^2 g_*$, where $g_*$ denotes the standard metric on $\bS^2$.

The existence of such a path follows from the uniformization theorem together the procedure in~\cite[Lemma 1.2]{M-S} to ensure that all conditions above are satisfied.

Recall that the scalar curvature of the collar metric~\eqref{collar} is given (see for example~\cite{CCMM,CC-survey}) by
\begin{equation}\label{scalar-collar}
\begin{split} 
R(\g) &= 2v(t,\cdot)^{-1}\left[ -\Lap_{F(t)^2 g(t)} v(t,\cdot) + \frac12 R(F(t)^2 g(t)) v(t,\cdot)  \right] \\
&\qquad + v(t,\cdot)^{-2}\left[ \dfrac{-2F'(t)^2 - 4F(t)F(t)''}{F(t)^2}  - \frac14 |g'(t)|^2_{g(t)}  + 4 \frac{\pr_t v(t,\cdot)}{v(t,\cdot)} \frac{F'(t)}{F(t)}\right],
\end{split}
\end{equation}
and the mean curvature of a $t$-constant slice $\S_t=\{ t \} \times \S$ is given by
\begin{equation} \label{mean-collar}
H(t,\cdot)= \frac{2 F'(t)}{v(t,\cdot) F(t)}.
\end{equation}

We will be interested in constructing collar extensions together with a divergence free vector field $E$, playing the role of an electric field. The next lemma gives a way to constructing them exploiting the properties of the path $\{ g(t) \}$.

\begin{lemma} \label{lemma-e-field-collar}
Let $(\S \cong \bS^2,g)$ be Riemannian 2-manifold, $v\colon[a,b] \times \bS^2 \to \R$ be a positive smooth function and $F\colon[a,b] \to \R$ be a smooth positive function. Let $\g$ be the metric on $[a,b] \times \bS^2$ given by
\begin{equation} \label{collar-indvlemma}
\g \definedas v(t,\cdot)^2 dt^2 + F(t)^2 g(t),
\end{equation}
where $\{ g(t) \}$ is a smooth path of metrics as in satisfying (i)-(iii) above. Then the vector field defined as
\begin{equation} \label{ef-indvlemma}
E \definedas \dfrac{Q}{r_o^2 v(t,\cdot) F(t)^2} \pr_t,
\end{equation}
where $Q$ is any constant, satisfies
\begin{equation} \label{dvEzero}
\dv_{\g} E = 0.
\end{equation}
In addition, for any $t \in [a,b]$ we have
\begin{equation} \label{ec-indvlemma}
\mathbf{Q}_{\S_t} = \frac{1}{4 \pi}\dint_{\S_t} \gamma(E,\nu) \, dA_{F(t)^2g(t)} = Q,
\end{equation}
where $\S_t = \{ t \} \times \S$ and $\nu$ is the unit normal vector to $\S_t$ pointing in the direction of $\pr_t$.
\end{lemma}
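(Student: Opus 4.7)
The plan is to exploit the crucial area-form preservation property (iii) of the path $\{g(t)\}$, which makes $\sqrt{\det g(t)}$ independent of $t$, so that the divergence computation collapses trivially.

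First I would record the volume form of the warped metric~\eqref{collar-indvlemma}: in any local coordinates $(x^1,x^2)$ on $\S$, it reads
\begin{equation*}
dV_\g = v(t,\cdot)\, F(t)^2\, \sqrt{\det g(t)}\; dt\wedge dx^1\wedge dx^2.
\end{equation*}
The key observation is that condition (iii), $\tr_{g(t)} g'(t) = 0$, is exactly the statement $\pr_t \sqrt{\det g(t)} = 0$, so $\sqrt{\det g(t)} \equiv \sqrt{\det g(0)}$ along the path.

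For \eqref{dvEzero}, I would apply the standard coordinate formula
\begin{equation*}
\dv_\g E = \frac{1}{\sqrt{|\g|}}\,\pr_\mu\!\bigl(\sqrt{|\g|}\, E^\mu\bigr).
\end{equation*}
Since $E = E^t\pr_t$ with $E^t = \frac{Q}{r_o^2\, v\, F^2}$, multiplying by $\sqrt{|\g|} = v F^2 \sqrt{\det g(t)}$ the factors $v$ and $F^2$ cancel exactly, leaving
\begin{equation*}
\sqrt{|\g|}\, E^t = \frac{Q}{r_o^2}\,\sqrt{\det g(t)},
\end{equation*}
which is $t$-independent by (iii). The only surviving derivative is $\pr_t$, and it vanishes.

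For \eqref{ec-indvlemma}, the unit normal to $\S_t$ in the $\pr_t$-direction is $\nu = v^{-1}\pr_t$, so
\begin{equation*}
\g(E,\nu) = \g\!\left(\frac{Q}{r_o^2 v F^2}\pr_t,\; \frac{1}{v}\pr_t\right) = \frac{Q}{r_o^2 F(t)^2},
\end{equation*}
which is constant along $\S_t$. Integrating against the induced area form $dA_{F(t)^2 g(t)} = F(t)^2\, dA_{g(t)}$ therefore gives
\begin{equation*}
\frac{1}{4\pi}\int_{\S_t} \g(E,\nu)\, dA_{F(t)^2 g(t)} = \frac{Q}{4\pi\, r_o^2}\, |\S|_{g(t)},
\end{equation*}
and invoking once more that $|\S|_{g(t)} = 4\pi r_o^2$ for every $t$ (a direct consequence of (iii) integrated over $\S$) yields the claimed value $Q$.

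There is no real obstacle here; the whole lemma is a clean consequence of the area-form preservation, and the content is essentially to record that the ansatz~\eqref{ef-indvlemma} is precisely the one tailored to make $\sqrt{|\g|}\,E^t$ constant in $t$.
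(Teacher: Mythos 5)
Your proof is correct and follows essentially the same route as the paper's: both use the coordinate formula $\dv_\g E = |\g|^{-1/2}\pr_\mu(\sqrt{|\g|}\,E^\mu)$ together with condition (iii) (which the paper phrases as $\tr(\g^{-1}\pr_t\g)$ contributing only the $v$- and $F$-terms, and which you phrase as $\pr_t\sqrt{\det g(t)}=0$), and both compute the charge via $\nu=v^{-1}\pr_t$ and the area preservation $|\S|_{g(t)}=4\pi r_o^2$. Your observation that $\sqrt{|\g|}\,E^t$ is literally $t$-independent is a slightly tidier packaging of the same cancellation.
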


\begin{proof}
Fix a system of coordinates on $\bS^2$. In what follows, we use Latin indices for the coordinates in $[a,b]\times \bS^2$. From the definition of $E$ we have
\begin{align*}
\dv_{\g} E &= \dfrac{1}{\sqrt{|\g|}}\pr_{i}(E^{i} \sqrt{|\g|} ) \\
&=\pr_t E^{t} + \dfrac{E^t}{2}\textnormal{trace}( \g^{-1} \pr_t \g),
\end{align*}
where $\textnormal{trace}( \g^{-1} \pr_t \g)$ denotes the trace of the matrix $( \g^{-1} \pr_t \g)$. Recalling that $\tr_{g(t)}g'(t) =0$, we have
\begin{align*}
\dfrac{E^t}{2}\tr( \g^{-1} \pr_t \g) &= \dfrac{Q}{r_o^2}\(\frac{v'(t, \cdot)}{v(t,\cdot)^2 F(t)^2} + 2 \frac{F'(t)}{v(t, \cdot)F(t)^3} \) = -\pr_t E^t,
\end{align*}
and~\eqref{dvEzero} follows. To check~\eqref{ec-indvlemma}, note that $\nu = v^{-1} \pr_t$ and thus
\begin{align*}
\mathbf{Q}_{\S_t} = \frac{1}{4 \pi} \int_{\S_t} \gamma(E,\nu) \, dV_{F(t)^2g(t)} &=\dfrac{Q}{4 \pi  r_o^2}  \int_{\S_t} dA_{g(t)} = Q.
\end{align*}
\end{proof}

As pointed out by McCormick, Miao, and the second and third named authors in~\cite{CCMM}, for time-symmetric initial data sets for the (uncharged) Einstein equations, in order to glue a collar extension of given data to a Schwarzchild manifold, it is useful to look at the growth of the Hawking mass along the collar extension. However, in the minimal case this reduces to controlling the area growth along the collar as in~\cite{M-S}; since we are dealing with minimal charged Bartnik data, the collar extensions considered in~\cite{M-S} will suffice for our goals (see Section~\ref{sec-minimal}). We remark that one could consider \emph{charged CMC Bartnik data}, i.e., when $H$ is a positive constant, and use the type of collar extensions constructed by Miao, Wang, and Xie in~\cite{MWX} (which improve those in~\cite{M-X}), which are expected to provide good control of the charged Hawking mass along the collar extension, to obtain the corresponding estimates together with the gluing tools developed in Sections~\ref{sec-gluing-tools}. We do not pursue this idea here.

\section{Gluing methods for rotationally symmetric electrically charged manifolds} \label{sec-gluing-tools}

In this section, we provide tools to glue two rotationally symmetric manifolds with corresponding electric fields satisfying the dominant energy condition, in such a way that the resulting manifold is an electrically charged Riemannian manifold satisfying the dominant energy condition. The construction is inspired on the Mantoulidis--Schoen construction and thus we follow the general procedure in~\cite{M-S}. Since some of the results may have independent interest, we prove them in general dimensions.

The following lemma characterizes a class of admissible electric fields for a rotationally symmetric Riemannian manifold, in the context of initial data sets with charge. Even though the choice of the electric field in the following result is somewhat arbitrary, it is reasonable to impose this form since it resembles the symmetries of the electric field of an spatial Reissner--Nordstr\"om manifold.

\begin{lemma} \label{lemma-DEC-rotsym}
Let $f\colon[a,b] \to \R$ be a smooth positive function. Define the  Riemannian metric $\g_f \definedas ds^2 + f(s)^2 g_*$ on the manifold $[a,b] \times \bS^n$, where $g_*$ denotes the standard metric on $\bS^n$. Then, the following holds. 
\begin{enumerate}[(a)] 
\item The vector field 
\begin{equation*}
E \definedas \frac{Q}{f(s)^n} \pr_s,
\end{equation*}
where $Q$ is any constant, satisfies
\begin{equation*}
\dv_{\g} E = 0.
\end{equation*}

\item $R(g) \geq n(n-1)|E|^2_{\g}$ if and only if 
\begin{equation}
f'' \leq \dfrac{n-1}{2f}\( 1 - (f')^2 - \dfrac{Q^2}{ f^{2(n-1)}}   \) \label{RgeqE2}
\end{equation}
\end{enumerate}
\end{lemma}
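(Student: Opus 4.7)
For part (a), my plan is a direct coordinate computation modeled on the proof of Lemma 3.1. Choosing coordinates $(s,\theta^1,\ldots,\theta^n)$ on $[a,b] \times \bS^n$, the volume element is $\sqrt{|\g_f|} = f(s)^n \sqrt{|g_*|}$. Since $E$ has only a radial component $E^s = Q/f^n$ and the standard metric $g_*$ has no $s$-dependence, the formula
\[
\dv_{\g_f} E = \frac{1}{\sqrt{|\g_f|}} \pr_s\!\left( E^s \sqrt{|\g_f|} \right) = \frac{1}{f^n \sqrt{|g_*|}} \pr_s\!\left( Q \sqrt{|g_*|} \right)
\]
vanishes because $Q$ is constant and $\sqrt{|g_*|}$ is independent of $s$. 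This is the whole of (a).

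For part (b), I will invoke the well-known formula for the scalar curvature of a warped product. Writing $\g_f = ds^2 + f(s)^2 g_*$ as a warped product of an interval with $(\bS^n,g_*)$ (whose scalar curvature is $R(g_*) = n(n-1)$), the standard formula yields
\[
R(\g_f) = \frac{R(g_*)}{f^2} - 2n\,\frac{f''}{f} - n(n-1)\,\frac{(f')^2}{f^2} = -2n\,\frac{f''}{f} + \frac{n(n-1)}{f^2}\bigl(1 - (f')^2\bigr).
\]
On the other hand, using $\g_f(\pr_s, \pr_s) = 1$, one has
\[
|E|^2_{\g_f} = \frac{Q^2}{f^{2n}}.
\]
Substituting both expressions into the inequality $R(\g_f) \geq n(n-1)|E|^2_{\g_f}$, multiplying through by $-f/(2n)$ (which reverses the inequality), and grouping terms produces precisely the equivalent condition
\[
f'' \leq \frac{n-1}{2f}\!\left( 1 - (f')^2 - \frac{Q^2}{f^{2(n-1)}} \right),
\]
which is \eqref{RgeqE2}. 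Since every step is a reversible algebraic manipulation once the scalar curvature identity is in hand, the equivalence is immediate.

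There is no genuine obstacle here; the entire content is computational. The only point requiring mild care is choosing a convention-consistent version of the warped product scalar curvature formula (signs and the precise coefficient of $f''$ sometimes differ between references), so I would double-check the derivation by computing the Ricci tensor components $\mathrm{Ric}(\pr_s,\pr_s) = -n f''/f$ and $\mathrm{Ric}$ on the $\bS^n$-factor directly via O'Neill's formulas, then trace to confirm the coefficient. Once this is verified, the statement follows by rearrangement alone.
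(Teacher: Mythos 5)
Your proposal is correct and follows essentially the same route as the paper: part (a) is the same divergence-in-coordinates computation the paper inherits from Lemma \ref{lemma-e-field-collar} (with $v\equiv 1$), and part (b) uses the same warped-product scalar curvature identity $R(\g_f)=\frac{n}{f^2}[(n-1)-(n-1)(f')^2-2ff'']$ (which the paper cites rather than derives) followed by the same algebraic rearrangement. Your formula and the sign bookkeeping check out, so no further verification is needed.
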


\begin{proof}
Note that the proof of (a) follows in the same way as the proof of~\eqref{dvEzero} in Lemma~\ref{lemma-e-field-collar} (with $v \equiv 1$).

For (b), first note that
\begin{equation*}
|E|^2_{\g} = \dfrac{Q^2}{f(s)^{2n}}.
\end{equation*}
The scalar curvature of $\g$ (see e.g.,~\cite[Equation (4.13)]{CM}) is given by
\begin{equation} \label{scalar-rot}
R(\g) = \dfrac{n}{f^2}[(n-1)-(n-1) (f')^2 - 2 f f''].
\end{equation}
Imposing $R(\g) \geq  \frac{n(n-1)Q^2}{f(s)^{2n}}$ leads to the inequality~\eqref{RgeqE2}.
\end{proof}

\begin{remark}
When $n=2$, part (b) in Lemma~\ref{lemma-DEC-rotsym} means that the electrically charged manifold $([a,b] \times \bS^n,\g_f)$ satisfies the dominant energy condition. Note that when $f=u_{m,Q}$ we have equalities in (b).
\end{remark}

The next lemma provides a way to smoothly glue two rotationally symmetric manifolds together with their respective electric fields. By adding stronger hypotheses in~\cite[Lemma 2.1]{CCMM} (cf.~\cite[Lemma 2.2]{M-S}), we are able to construct a rotationally symmetric bridge between the two manifolds together with an electric field.

\begin{lemma} \label{lemma-gluing}
Let $f_i \colon[a_i,b_i] \to \R$ (for $i=1,2$) be two smooth positive functions, $Q_i$ ($i=1,2$) be two constants and let $n \geq 2$. Suppose that
\begin{enumerate}
\itemsep0.25em
\item The metrics $\g_i \definedas ds^2 + f_i(s)^2 g_*$ on $[a_i,b_i] \times \bS^n$ satisfy
\begin{equation*}
R(\g_i) > n(n-1)|E_i|^2_{\g_i},
\end{equation*}
where $E_i \definedas \dfrac{Q_i}{f(s)^n} \pr_s$,
\item $f_1(b_1) < f_2(a_2)$ and $f_2'(a_2) \leq f_1'(b_1)$,
\item $f_1(b_1)^{n-1} > |Q_1|$, and $1+\frac{Q_1^2}{f_1(b_1)^{2(n-1)}} - \frac{2|Q_1|}{f_1(b_1)} > f_1'(b_1)^2$, and
\item $f_2(a_2)^{n-1} > |Q_2|$, and $1+\frac{Q_2^2}{f_2(a_2)^{2(n-1)}} - \frac{2|Q_2|}{f_2(a_2)} > f_2'(a_2)^2$.

\end{enumerate}
Then, after an appropriate translation of the interval $[a_2,b_2]$ so that 
\begin{equation}
\begin{cases}
(a_2-b_1)f_1'(b_1)=f_2(a_2)-f_1(b_1), \text{ if $f_1'(b_1)=f_2'(a_2)$,} \\
(a_2-b_1)f_1'(b_1) > f_2(a_2) - f_1(b_1) > (a_2-b_1)f_2'(a_2), \text{ if $f_1'(b_1) \geq f_2'(a_2)$,} 
\end{cases}
\end{equation}
there exists a smooth positive function $f\colon[a_1,b_2] \to \R$ such that 
\begin{enumerate}[(i)]
\itemsep0.5em
\item $f \equiv f_1$ on $[a_1,\frac{a_1+b_1}{2}]$,
\item $f \equiv f_2$ on $[\frac{a_2+b_2}{2},b_2]$,
\item the scalar curvature of $\g \definedas  ds^2 + f(s)^2 g_*$ satisfies 
\begin{equation*}
R(\g) >n(n-1)|E|^2_{\g},
\end{equation*}
where $E \definedas \dfrac{Q}{f(s)^n}\pr_s$, where $Q$ is any number such that $Q^2 \leq \min\{ {Q_1^2,Q_2^2} \}$.
\end{enumerate}
\end{lemma}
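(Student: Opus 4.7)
I would follow the Mantoulidis--Schoen bridge-and-mollify scheme (see~\cite[Lemma 2.1]{CCMM}, cf.~\cite[Lemma 2.2]{M-S}), adapted to the charged setting. After translating $[a_2,b_2]$ as prescribed, define a $C^0$ piecewise function $\bar f\colon [a_1,b_2]\to \R$ that equals $f_1$ on $[a_1,b_1]$, equals $f_2$ on $[a_2,b_2]$, and is linear on the bridge $[b_1,a_2]$, i.e.\ $\bar f(s) \definedas f_1(b_1) + m(s-b_1)$ there, where $m \definedas \bigl(f_2(a_2)-f_1(b_1)\bigr)/(a_2-b_1)$. The translation condition gives $f_2'(a_2)\leq m \leq f_1'(b_1)$ (with strict inequalities in the generic case $f_1'(b_1)>f_2'(a_2)$), and together with $f_2(a_2)>f_1(b_1)$ and $a_2>b_1$ this forces $m>0$; in particular $f_1'(b_1)>0$ and at each corner the one-sided derivatives of $\bar f$ jump \emph{downwards}.

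By Lemma~\ref{lemma-DEC-rotsym}(b), it suffices to arrange that the final smooth profile $f$ satisfies $f'' < \tfrac{n-1}{2f}\bigl(1 - (f')^2 - Q^2/f^{2(n-1)}\bigr)$ pointwise. For $\bar f$ away from the corners this splits into three pieces. On $[a_1,b_1]$ and $[a_2,b_2]$ it follows from hypothesis~(1), using that the right-hand side is monotone decreasing in $Q^2$ and that $Q^2 \leq \min\{Q_1^2,Q_2^2\}$. On the linear bridge $\bar f''=0$ and $\bar f \geq f_1(b_1)$, so it suffices to prove $1 - m^2 > Q_1^2/f_1(b_1)^{2(n-1)}$. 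Using $|Q_1|<f_1(b_1)^{n-1}$ (for $n=2$, which is the main case of interest, this directly gives $Q_1^2/f_1(b_1)^2 \leq |Q_1|/f_1(b_1)$), one obtains
\begin{equation*}
1 - \frac{2|Q_1|}{f_1(b_1)} + \frac{Q_1^2}{f_1(b_1)^{2(n-1)}} \leq 1 - \frac{Q_1^2}{f_1(b_1)^{2(n-1)}};
\end{equation*}
combining this with the second part of hypothesis~(3) and the bound $0 < m < f_1'(b_1)$ established above yields
\begin{equation*}
m^2 < f_1'(b_1)^2 < 1 - \frac{2|Q_1|}{f_1(b_1)} + \frac{Q_1^2}{f_1(b_1)^{2(n-1)}} \leq 1 - \frac{Q_1^2}{f_1(b_1)^{2(n-1)}},
\end{equation*}
as needed. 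Hypothesis~(4) provides the symmetric estimate viewed from the right endpoint and is used during the smoothing step near $a_2$.

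Finally, I would produce the smooth $f$ by convolving $\bar f$ with a standard mollifier $\eta_\veps$ supported in a $\veps$-interval around each corner $b_1$ and $a_2$, choosing $\veps$ small enough that the mollification is confined to $\bigl(\tfrac{a_1+b_1}{2}, \tfrac{a_2+b_2}{2}\bigr)$, thereby securing properties~(i)--(ii). Outside the two corner intervals $f=\bar f$ and the previous analysis applies. Inside them, the downward jumps in $\bar f'$ translate, after mollification, into values of $f''$ that are large and negative (of order $-\veps^{-1}$), while the right-hand side of the DEC inequality remains uniformly bounded; hence the strict DEC inequality is satisfied with ample room. The main obstacle I anticipate is this last step: one must carefully track the interaction between the strict inequalities in hypotheses~(1), (3), (4) and the mollification scale $\veps$, ensuring that the $O(\veps)$ perturbations of $f$ and $f'$ near (but outside) the corners do not tip the strict DEC over the edge there, which is precisely where the strict sub-extremality embedded in hypotheses~(3)--(4) becomes essential.
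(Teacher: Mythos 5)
Your overall strategy (reduce to the ODE inequality $f''<\tfrac{n-1}{2f}(1-(f')^2-Q^2f^{-2(n-1)})$ via Lemma~\ref{lemma-DEC-rotsym}(b), build a bridge, mollify) is the right one, and your use of hypotheses (2)--(4) to get $0<m\leq f_1'(b_1)$ and $1-m^2>Q^2/\bar f^{2(n-1)}$ on the bridge matches what the paper does. The substantive difference is the bridge itself: the paper does not use a piecewise-\emph{linear} bridge with corners. It chooses a non-increasing function $\zeta$ on $[b_1,a_2]$ with $\zeta(b_1)=f_1'(b_1)$, $\zeta(a_2)=f_2'(a_2)$ and $\int\zeta=f_2(a_2)-f_1(b_1)$ (this is exactly what the translation condition makes possible), and sets $\widehat f=f_1(b_1)+\int_{b_1}^{\cdot}\zeta$. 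The concatenated profile is then globally $C^{1,1}$ with the strict inequality $\widetilde f''+3d\leq\Omega[\widetilde f]$ holding a.e.\ for some $d>0$, so the mollification step is the routine one: $f_\veps\to\widetilde f$ in $C^1$, $\Omega[f_\veps]\to\Omega[\widetilde f]$ in $C^0$, done.

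By contrast, your $C^0$ bridge pushes all the difficulty into the corner windows, and your justification there is not correct as stated. It is not true that $f_\veps''$ is ``large and negative'' throughout an $\veps$-window around a corner: the distributional second derivative of $\bar f$ is (negative delta mass) $+$ (the classical $f_i''$ on one side), and near the \emph{edges} of the window the delta contributes almost nothing while the mollified classical part is close to $f_2''(a_2)$ (say), which may be large and positive --- e.g.\ it is positive for a Reissner--Nordstr\"om profile. At such points $f_\veps'$ has already drifted to a convex combination $(1-\theta)m+\theta f_2'(a_2)$, and hypothesis (1) only controls $f_2''(a_2)$ against $\Omega$ evaluated at the slope $f_2'(a_2)$, not at these intermediate slopes; so ``satisfied with ample room'' does not follow. (You also locate the danger ``near but outside the corners,'' where in fact nothing happens since $f_\veps=\bar f$ there.) The gap is closable: writing $f_\veps''\leq\theta\max(f_2''(a_2),0)+O(\veps)$ and $f_\veps'=(1-\theta)m+\theta f_2'(a_2)+O(\veps)$ with $\theta\in[0,1]$, the required inequality holds at $\theta=0$ (by your bridge estimate) and at $\theta=1$ (by hypothesis (1)), and the difference RHS $-$ LHS is concave in $\theta$, hence positive on all of $[0,1]$; alternatively, simply adopt the paper's derivative-matched bridge and avoid the corners altogether. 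One last caveat: your chain $1+Q_1^2f_1(b_1)^{-2(n-1)}-2|Q_1|f_1(b_1)^{-1}\leq 1-Q_1^2f_1(b_1)^{-2(n-1)}$ uses $|Q_1|<f_1(b_1)^{2n-3}$, which follows from hypothesis (3) only for $n=2$ (the case actually used later, and the paper's own factorization has the same restriction), so you were right to flag it.
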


\begin{proof}
Let $\zeta\colon[b_1,a_2] \to \R$ be a smooth function such that
\begin{enumerate}
\itemsep0.5em
\item $\zeta(b_1) = f_1'(b_1)$,
\item $\zeta(a_2) = f_2'(a_2)$,
\item $\zeta' \leq 0$, and
\item $\int_{b_1}^{a_2} \zeta(x) \, dx = f_2(a_2) - f_1(b_1)$.
\end{enumerate}
Define the function $\widehat f \colon [b_1,a_2] \to \R$ by
\begin{equation*}
\widehat f(s) \definedas f_1(b_1) + \dint_{b_1}^{s} \zeta(x) \, dx.
\end{equation*}
Then the function 
\begin{equation*}
\widetilde f (s) \asdefined
\begin{cases}
f_1 \textnormal{ on $[a_1,b_1]$} \\
\hat f \textnormal{ on $[b_1,a_2]$} \\
f_2 \textnormal{on $[a_2,b_2]$}
\end{cases}
\end{equation*}
is $C^{1,1}$ on $[a_1,b_2]$.

Our goal is to consider an appropriate mollification $f_{\veps}$ of $\widetilde f$ so that
\begin{equation*}
\g_{\veps} \definedas ds^2 + f_{\veps}(s)^2 g_*,
\end{equation*}
satisfies $R(\g_{\veps}) > n(n-1)|E_{\veps}|^2_{\g_{\veps}}$, for the vector field $E_{\veps}$ defined as $E_{\veps} \definedas Qf_{\veps}^{-n} \pr_s$, where the charge $Q$ satisfies $Q^2 \leq \min\{ {Q_1^2,Q_2^2} \}$. By Lemma~\ref{lemma-DEC-rotsym}, we know that for a smooth metric $\g = ds^2 + f(s)^2 g_*$ and a vector field $E=Q f^{-n} \pr_s$, we have $R(\g) > n(n-1) |E|^2_{\g}$ if and only~if
\begin{equation*}
f'' <  \dfrac{n-1}{2 f}\(1 - (f')^2 - \dfrac{Q^2}{f^{2(n-1)}}  \).
\end{equation*}
We now define 
\begin{equation*}
\Omega[f] \definedas\dfrac{n-1}{2 f}\( 1 - (f')^2 - \dfrac{Q^2}{f^{2(n-1)}}  \).
\end{equation*}
Note that since $\widetilde f = f_i$ on $[a_i,b_i]$ ($i=1,2$),  we have
\begin{align*}
\Omega[\widetilde f] 
&=  \dfrac{n-1}{2 f_i}\( 1 - (f_i')^2 - \dfrac{Q^2}{f_i^{2(n-1)}}  \) \\
&=\Omega[f_i] \\
&>f_i'' \\
&= \widetilde f''.
\end{align*}
on $[a_1,b_1) \cup (a_2,b_2]$. Notice that conditions (3) and (4) imply that $\Omega[\widetilde{f}](b_1) > 0$ and $\Omega[\widetilde{f}](a_2)>0$, respectively. Indeed, using the fact that $Q^2<Q_1^2$ we have
\begin{align*}
\frac{2f_1(b_1)}{n-1}\Omega[f_1](b_1)&=1- f_1'(b_1)^2 - \frac{Q^2}{ f_1(b_1)^{2(n-1)}}\\
	&> 1 - \( 1+\frac{Q_1^2}{f_1(b_1)^{2(n-1)}} - \frac{2|Q_1|}{f_1(b_1)}   \) - \frac{Q^2}{ f_1(b_1)^{2(n-1)}} \\
	&=  -\frac{Q_1^2}{f_1(b_1)^{2(n-1)}} + \frac{2|Q_1|}{f_1(b_1)}  - \frac{Q_1^2}{ f_1(b_1)^{2(n-1)}} \\
	&=\frac{2|Q_1|}{f_1(b_1)^{2(n-1)}}\( f_1(b_1)^{(n-1)}  - |Q_1|\)>0.
\end{align*}
Similarly, one can check that $\Omega[\widetilde{f}](a_2)>0$.

On $(b_1,a_2)$, $\widetilde f'' \leq 0$, then $\widetilde f'$ is non-increasing. Moreover, the infimum of $\widetilde f$ on  $[b_1,a_2]$ is $f_1(b_1)$ and its maximum is attained at some $s_*$ in $(b_1,a_2]$. It follows that $f_1'(b_1) \geq \widetilde f'(s) \geq 0$ on $(b_1,s_*]$, using the positivity of $\Omega[\widetilde{f}](b_1)$, we have
\begin{equation*}
\dfrac{Q^2}{ \widetilde f(t)^{2(n-1)}} \leq \dfrac{Q_1^2}{ f_1(b_1)^{2(n-1)}}< 1 - f_1'(b_1)^2 \leq 1 - \widetilde f'(s)^2,
\end{equation*}
for $b_1 <s \leq s_*$. On the other hand,if $s_* < a_2$, on $[s_*,a_2)$, we have $0 \geq \widetilde f'(s) \geq f_2'(a_2)$, and hence using the positivity of $\Omega[\widetilde{f}](a_2)$, we have
\begin{equation*}
\dfrac{Q^2}{\widetilde f(t)^{2(n-1)}}\leq\dfrac{Q_2^2}{f_2(a_2)^{2(n-1)}} < 1 - f_2'(a_2)^2 \leq 1 - \widetilde f'(s)^2,
\end{equation*}
for $s_* \leq s < a_2$. This shows that $\Omega[\widetilde f] > 0$ on $(b_1,a_2)$. Therefore, $\Omega[\widetilde f] > \widetilde f''$ on $[a_1,b_2] \setminus \{ b_1, a_2 \}$. 

Let $d>0$ be given by
\begin{equation*}
3d \definedas \inf\limits_{[a_1,b_2] \setminus \{ b_1, a_2 \} } \( \Omega[\widetilde f] - \widetilde f''  \),
\end{equation*}
so that 
\begin{equation*}
\widetilde f'' + 3d\leq \Omega[\widetilde f],
\end{equation*}
where $\widetilde f''$ is defined.
Consider now a mollification of $\widetilde f$, $f_{\veps}$, fixing a neighborhood of the boundaries, say
\begin{equation*}
\textnormal{ $f_{\veps} \equiv f_1$  on $\left[a_1,\dfrac{a_1+b_1}{2} \right]$, and $f_{\veps} \equiv f_2$ on $\left[ \dfrac{a_2+b_2}{2}, b_2 \right]$.} 
\end{equation*}
One can explicitly define this mollification as in~\cite[Equation (4.19)]{CM}. In addition, one can check that $f_{\veps} \to \widetilde f$ in $C^1([a_2,b_2])$ as $\veps \to 0^+$, which in turns implies that $\Omega[f_{\veps}] \to \Omega[\widetilde f]$ in $C^0([a_2,b_2])$ as $\veps \to 0^+$. Then, for $\veps$ sufficiently small, we have
\begin{equation*}
\sup\limits_{[a_1,b_2]} \left| \Omega[\widetilde f] - \Omega[f_{\veps}]  \right| < d,
\end{equation*}
and consequently,
\begin{equation*}
f_{\veps}''(s) < \Omega[f_{\veps}](s) = \dfrac{n-1}{2 f_{\veps}(s)}\( 1 - f_{\veps}'(s)^2 - \dfrac{Q^2}{ f_{\veps}(s)^{2(n-1)}}  \),
\end{equation*}
as desired. Note that the electric field is then given by $E_{\veps}\definedas\dfrac{Q}{f_{\veps}(s)^n}\pr_s$.
\end{proof}

\begin{remark}
Note that by Remark~\ref{lemma-DEC-rotsym}, the vector fields considered in the hypothesis and conclusion of Lemma~\ref{lemma-gluing} are divergence-free.
\end{remark}

\begin{remark}
If in addition we require $f_i' > 0$ on $[a_i,b_i]$ for $i=1,2$ in Lemma~\ref{lemma-gluing}, the resulting bridging function $f$ satisfies $f' > 0$, which implies that the resulting manifold $[a_1,b_2] \times \bS^n$ with metric $\g = dt^2 + f(s)^2 g_*$ is foliated by positive constant mean curvature spheres.
\end{remark}

\begin{remark} \label{mCH-cond}
We can relate conditions (2) and (3) of Lemma~\ref{lemma-gluing} to a Reissner--Nordstr\"om manifold as follows. Suppose that $n=2$, then conditions (2) and (3) for $f=u_{m,Q}$ translate to
\begin{equation*}
u_{m,Q}(s) > |Q| \quad \text{ and } \quad 1+\frac{Q^2}{u_{m,Q}(s)^{2}} - \frac{2|Q|}{u_{m,Q}(s)}>u_{m,Q}'(s)^2.
\end{equation*}
The first statement is always true in a sub-extremal Reissner--Nordstr\"om manifold of mass and charge $m>|Q|$, since $u_{m,Q}(0)> |Q|$. Moreover, by definition of charged Hawking mass for the sub-extremal Reissner--Nordstr\"om we have $\m_H^{CH}(\S_s)=m>|Q|$ which is the second statement.
\end{remark}

Recall that we are interested in smoothly gluing two electrically charged Riemannian manifolds satisfying the dominant energy condition. Lemma~\ref{lemma-gluing} gives a way to smoothly glue two rotationally symmetric manifolds with electric fields satisfying certain conditions. The following lemma gives a way to modify a manifold with $R(\g) \geq n(n-1)|E|_{\g}^2$ to achieve strict inequality in a small region (cf.~\cite[Lemma 3.2]{CCM}). It will be applied to a $3$-dimensional Reissner--Nordstr\"om manifold, where $R(\g) = 2|E|_{\g}^2$ holds. 

\begin{lemma} [Bending lemma]\label{bending}
	Let $\gamma=ds^2+f(s)^2g_{\ast}$ be a metric on a cylinder $[a,\infty)\times \bS^n$, where $f(s)$ is a smooth positive function and $g_{\ast}$ is the standard round metric on $\bS^n$. Assume that the scalar curvature of $\gamma$ satisfies $R(\gamma)\geq  n(n-1)|E|^2_{\gamma}$ for the vector field $E=\frac{Q}{f(s)^{n}}\partial_s$, where $Q$ is a constant. Then for any $s_0>a$ with $f'(s_0)>0$, there is a $\delta>0$ and a metric $\widetilde{\gamma}=ds^2+\widetilde{f}(s)^2g_{\ast}$, such that $\widetilde{\gamma}=\gamma$ on $[s_0,\infty)\times \bS^n$ and $R(\widetilde{\gamma})>n(n-1)|\widetilde{E}|^2_{\gamma}$ on $[s_0-\delta,s_0]\times \bS^n$, where $\widetilde{E}=\frac{Q}{\widetilde{f}(s)^{n}}\partial_s$. If in addition, $f(s_0)>\alpha$ for some positive constant $\alpha$ and $f''(s_0)>0$, then $\widetilde{f}(s_0-\delta)>\alpha$ and $\widetilde{f}'(s-\delta)<\widetilde{f}'(s_0)$.
\end{lemma}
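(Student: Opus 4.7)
The plan is to pass to the radial coordinate $r=f(s)$ and convert the dominant energy condition into a first-order differential inequality that is easier to perturb. Since $f'(s_{0})>0$, the map $s\mapsto r=f(s)$ is a local diffeomorphism near $s_{0}$; setting $V(r)\definedas f'(s(r))^{2}$, the metric reads $\gamma=V(r)^{-1}dr^{2}+r^{2}g_{\ast}$, and the identity $f''(s)=\tfrac{1}{2}V'(r)$ combined with~\eqref{scalar-rot} shows that the DEC $R(\gamma)\geq n(n-1)|E|^{2}_{\gamma}$ is equivalent to
\begin{equation*}
V'(r)\;\leq\;\frac{n-1}{r}\left(1-V(r)-\frac{Q^{2}}{r^{2(n-1)}}\right)\asdefined \Psi[V](r).
\end{equation*}
Write $r_{0}\definedas f(s_{0})$.

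The central step is a one-sided smooth perturbation of $V$ that turns this inequality into a strict one on a left-neighborhood of $r_{0}$ while matching $V$ to infinite order at $r_{0}$. I would take
\begin{equation*}
\widetilde{V}(r)\definedas V(r)+\varepsilon\,\phi(r),\qquad \phi(r)\definedas e^{-c/(r_{0}-r)}\text{ for }r<r_{0},\quad \phi(r)\definedas 0\text{ for }r\geq r_{0},
\end{equation*}
with $\varepsilon>0$ small and $c>0$ chosen below. This $\phi$ is smooth on $\mathbb{R}$ and vanishes to infinite order at $r_{0}$, and a direct computation yields
\begin{equation*}
\widetilde{V}'(r)-\Psi[\widetilde{V}](r)\;=\;\bigl(V'(r)-\Psi[V](r)\bigr)+\frac{\varepsilon}{r^{n-1}}\bigl(r^{n-1}\phi(r)\bigr)'.
\end{equation*}
The first summand is $\leq 0$ by the original DEC. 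For the second, $\bigl(r^{n-1}\phi\bigr)'=r^{n-2}\phi\bigl[(n-1)-rc/(r_{0}-r)^{2}\bigr]$ is strictly negative on $[r_{0}-\eta,r_{0})$ once $c>(n-1)\eta^{2}/(r_{0}-\eta)$. Hence $\widetilde{V}$ satisfies $\widetilde{V}'<\Psi[\widetilde{V}]$ strictly on that interval.

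To close the construction, I would reconstruct the perturbed profile: define $\widetilde{s}(r)\definedas s_{0}-\int_{r}^{r_{0}}\widetilde{V}(\rho)^{-1/2}\,d\rho$, invert to obtain $\widetilde{f}$, and set $\widetilde{\gamma}\definedas ds^{2}+\widetilde{f}(s)^{2}g_{\ast}$ together with $\widetilde{E}\definedas Q\widetilde{f}(s)^{-n}\partial_{s}$. Because $\phi$ vanishes to infinite order at $r_{0}$, the functions $\widetilde{f}$ and $f$ agree to infinite order at $s_{0}$, so $\widetilde{\gamma}=\gamma$ on $[s_{0},\infty)\times\bS^{n}$; choosing $\delta>0$ small enough that $\widetilde{f}([s_{0}-\delta,s_{0}])\subset(r_{0}-\eta,r_{0}]$ delivers the strict DEC on $[s_{0}-\delta,s_{0}]\times\bS^{n}$. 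For the final assertions, note $\widetilde{f}'(s_{0})=\sqrt{V(r_{0})}=f'(s_{0})$; shrinking $\delta$ keeps $\widetilde{f}(s_{0}-\delta)$ arbitrarily close to $r_{0}>\alpha$, and the hypothesis $f''(s_{0})>0$ (equivalently $V'(r_{0})>0$) combined with smallness of $\varepsilon$ gives
\begin{equation*}
\widetilde{f}'(s_{0}-\delta)^{2}=V(\widetilde{f}(s_{0}-\delta))+\varepsilon\phi(\widetilde{f}(s_{0}-\delta))<V(r_{0})=\widetilde{f}'(s_{0})^{2}.
\end{equation*}

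The main obstacle is the tension between infinite-order vanishing of the perturbation at $r_{0}$ (required for smooth gluing with the unmodified metric on $[s_{0},\infty)$) and producing strict inequality across the full perturbed interval. The exponential bump $e^{-c/(r_{0}-r)}$ resolves this: unlike a polynomial, it lets the ``flux'' $r^{n-1}\phi$ be strictly monotone all the way up to $r_{0}$ while remaining smooth there, and the parameter $c$ quantifies precisely how much slack can be extracted from the original DEC bound.
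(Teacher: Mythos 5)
Your proposal is correct, but it takes a genuinely different route from the paper's. The paper deforms the profile by reparametrizing the arclength variable, setting $\widetilde f(s)=f(\sigma(s))$ with $\dot\sigma(s)=1+e^{-(s-s_0)^{-2}}$, and then verifies by a direct expansion of~\eqref{scalar-rot} that the term $4n f(\sigma)f'(\sigma)e^{-(s-s_0)^{-2}}/(s_0-s)^{3}$ --- positive precisely because $f'>0$ near $s_0$ --- dominates the remaining error terms as $\delta\to 0$. You instead pass to the area-radius coordinate $r=f(s)$ (legitimate near $s_0$ since $f'(s_0)>0$), where the second-order condition of Lemma~\ref{lemma-DEC-rotsym}(b) collapses to the first-order inequality $V'\le \Psi[V]$ for $V=(f')^2$, and you add the one-sided bump $\varepsilon e^{-c/(r_0-r)}$ to $V$; strictness is then governed by the sign of $(r^{n-1}\phi)'$, which your choice $c>(n-1)\eta^2/(r_0-\eta)$ makes negative on $[r_0-\eta,r_0)$. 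Both constructions hinge on a perturbation vanishing to infinite order at the gluing point, but yours trades the paper's somewhat delicate curvature expansion for a transparent ODE comparison, and it isolates cleanly why each hypothesis enters: $f'(s_0)>0$ to invert $s\mapsto f(s)$, and $f''(s_0)>0$ (i.e.\ $V'(r_0)>0$) so that the monotone growth of $V$ beats the infinite-order-small bump and yields $\widetilde f'(s_0-\delta)<f'(s_0)$. Two minor points, both shared with (not worse than) the paper's own proof: the strict inequality at the endpoint $s=s_0$ itself is only obtained if it already held for $\gamma$ there, since the perturbation vanishes to infinite order at $s_0$; and the order of choices should be stated explicitly ($\eta$, $c$, $\varepsilon$ fixed first, then $\delta$ shrunk) so that the final two assertions follow from shrinking $\delta$ alone.
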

\begin{proof}
The proof uses the main idea for the deformation used in~\cite[Lemma 2.3]{M-S}. Note that the result holds trivially if at $s=s_0$ we have $R(\gamma)> n(n-1)|E|^2_{\gamma}$. Otherwise, we need to deform the metric to increase scalar curvature. Consider the function $\sigma$ on $s\in[s_0-\delta,s_0)$, for some $\delta \leq s_0 - a$ to be determined, defined as
\begin{equation}
\sigma(s)=\int_{s_0-\delta}^{s}\left(1+e^{-(t-s_0)^{-2}}\right)dt+K_{\delta},
\end{equation}
where $K_{\delta}$ is a positive constant so that $\sigma(s_0)=s_0$, and let $\sigma(s)=s$ for $s\geq s_0$. Define the metric $\widetilde{\gamma}=ds^2+f(\sigma(s))^2g_{\ast}\asdefined ds^2+\widetilde{f}(s)^2g_{\ast}$. Using again~\eqref{scalar-rot} together with $R(\gamma)\geq n(n-1)|E|_{\gamma}^2$, we obtain
\begin{align*}
	R(\widetilde{\gamma})&-n(n-1)|\widetilde{E}|_{\widetilde{\gamma}}^2\\
	&=\frac{n}{\widetilde{f}(s)^2}\( (n-1) - (n-1)\left[\widetilde{f}'(s)\right]^2 -2\widetilde{f}(s)\widetilde{f}''(s) \)  -\frac{n(n-1)Q^2}{\widetilde{f}(s)^{2n}}\ \\
	&=\frac{n}{f(\sigma(s))^2}\left((n-1)-(n-1)\dot{\sigma}(s)^2-2f(\sigma(s))f'(\sigma(s))\ddot{\sigma}(s)\right)\\
	&\qquad +\frac{n\dot{\sigma}^2}{f(\sigma(s))^2}\left((n-1)-(n-1)\left[f'(\sigma(s))\right]^2-2f(\sigma(s))f''(\sigma(s))\right)-\frac{n(n-1)Q^2}{f(\sigma(s))^{2n}}\\
	&\geq \frac{n}{f(\sigma(s))^2}\left((n-1)-(n-1)\dot{\sigma}(s)^2-2f(\sigma(s))f'(\sigma(s))\ddot{\sigma}(s)\right)+(\dot{\sigma}^2-1)\frac{n(n-1)Q^2}{f(\sigma(s))^{2n}},\\
\end{align*}
where $s$-derivatives and $\sigma$-derivatives are denoted by $\dot{}$ and $'$, respectively. For $s\in[s_0-\delta,s_0)$, we then have
\begin{align*}
	R(\widetilde{\gamma})-n(n-1)|\widetilde{E}|_{\widetilde{\gamma}}^2&\geq e^{-(s-s_0)^{-2}}\left(\frac{n(n-1)Q^2}{f(\sigma(s))^{2n}}-\frac{n(n-1)}{f(\sigma(s))^2}\right)\left(
	e^{-(s-s_0)^{-2}}+2\right)\\
	&\qquad+\frac{e^{-(s-s_0)^{-2}}4nf(\sigma(s))f'(\sigma(s))}{(s_0-s)^{3}}.
\end{align*} 
If we select $\delta$ sufficiently small, the second term in right-hand side dominates as $\delta\to 0$ and we obtain
	\begin{equation}
	R(\widetilde{\gamma})-n(n-1)|\widetilde{E}|_{\widetilde{\gamma}}^2>0,
	\end{equation}
on $[s_0-\delta,s_0)$. This complete the first part of the claim.  Moreover, if in addition $f(s_0) > \alpha > 0$,  choosing $\delta$ sufficiently small gives $\widetilde{f}(s_0-\delta)>\alpha$, by continuity. Finally, observe that
	\begin{equation}
	\frac{d^2}{ds^2}\widetilde{f}(s)=f''(\sigma(s))\dot{\sigma}(s)^2+f'(\sigma(s))\ddot{\sigma}(s)
	\end{equation}on $[s_0 -\delta, s_0]$. Then, if $\frac{d^2}{ds^2}{f}(s_0)>0$, by selecting $\delta$ sufficiently small, we have  $\frac{d^2}{ds^2}\widetilde{f}(s)>0$. As a result, $\frac{d}{ds}\widetilde{f}(s)$ is increasing on $[s_0-\delta, s_0]$, which implies $\frac{d}{ds}\widetilde{f}(s_0-\delta)<\frac{d}{ds}\widetilde{f}(s_0)=f'(s_0)$.
\end{proof}

\subsection{Smooth gluing to a Reissner--Nordstr\"om space}

The next proposition will allow us to glue a rotationally symmetric manifold with an electric field to an exterior region of a Reissner--Nordstr\"om manifold, in such a way that the ADM mass of the resulting manifold is controlled. For simplicity and since we will apply this result to obtain $3$-dimensional electrically charged Riemannian manifolds, we set $n=2$.

\begin{prop}\label{glue-collar-RN}
	Let $\g_f = ds^2 + f(s)^2 g_*$ be a metric on $[a,b] \times \bS^2$ and $\S_s = \{s \} \times \bS^2$. Suppose that 
	\begin{enumerate}[(i)]
	\itemsep0.5em
		\item the scalar curvature of $\g_f$ satisfies 
		\begin{equation*}
		R(\g_f) > 2 |E_f|^2_{\g_f},
		\end{equation*}
		where $E_f=Q f^{-2}\partial_s$ for some constant $Q$,
		\item $f(b)>|Q|$,
		\item the mean curvature of $\S_b$ is positive, and
		\item $\m_H^{CH}(\S_b)>|Q|$ .
	\end{enumerate}

Then, for any $m_e > \m_H^{CH}(\S_b)$, there exists a rotationally symmetric, asymptotically flat, electrically charged Riemannian manifold $(M,\gamma,E)$ with divergence-free vector field $E$ and $R(\g) \geq |E|_{\g}^2$ such that
	\begin{enumerate}[(I)]
	\itemsep0.5em
		\item its boundary $\pr M$ has a neighborhood which is isometric to $(\left[a,\frac{a+b}{2}\right], \g_f)$
		\item $(M,\gamma,E)$ is isometric to a sub-extremal Reissner--Nordstr\"om space of mass $m_e$ and charge $Q$ outside a neighborhood of the inner boundary, and
		\item if $f'>0$, $M$ can be foliated by mean convex spheres that eventually coincide with the coordinate spheres in the Reissner--Nordstr\"om space.
	\end{enumerate}
\end{prop}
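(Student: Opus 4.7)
The plan is to smoothly glue the given $(\g_f, E_f)$ to an exterior region of the sub-extremal Reissner--Nordstr\"om manifold with mass $m_e$ and charge $Q$ by means of Lemma~\ref{lemma-gluing}, and then extend by the full RN metric to infinity. Since $m_e > \m_H^{CH}(\S_b) > |Q|$, the RN parameters are sub-extremal; denote its radial profile by $u \definedas u_{m_e,Q}$. The obstacle is that RN saturates $R = 2|E|^2$, failing the strict-inequality hypothesis of Lemma~\ref{lemma-gluing}. The bending lemma (Lemma~\ref{bending}) will supply a small, strictly super-critical perturbation of the RN metric over a short interval, which will serve as the right-hand input $f_2$ in the gluing lemma.

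First, I identify a gluing location $s_0 > 0$ in RN with $u(s_0) > f(b)$ and $u'(s_0) \leq f'(b)$. The key observation is that the charged Hawking mass along RN is identically $m_e$, giving $(u/2)(1 + Q^2/u^2 - (u')^2) = m_e$ in RN, so at area-radius $u = f(b)$ one has $u'^2 = 1 - 2m_e/f(b) + Q^2/f(b)^2 < 1 - 2\m_H^{CH}(\S_b)/f(b) + Q^2/f(b)^2 = f'(b)^2$, where the strict inequality uses the hypothesis $m_e > \m_H^{CH}(\S_b)$. By continuity and the monotonicity of $u'$ as a function of $u$ on $(r_+, \infty)$, there is an $s_0$ with $u(s_0) > \max(f(b), r_+)$ at which both desired inequalities hold. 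Now applying Lemma~\ref{bending} at this $s_0$ with threshold $\alpha = f(b)$ --- using $u'(s_0) > 0$ and $u''(s_0) = (m_e u - Q^2)/u^3 > 0$ (since $u > r_+ > m_e > Q^2/m_e$) --- produces a smooth $\widetilde{f}$ on $[s_0-\delta, \infty)$ for some small $\delta > 0$, equal to $u$ on $[s_0, \infty)$, with $R(\widetilde{\g}) > 2|\widetilde{E}|^2_{\widetilde{\g}}$ on $[s_0-\delta, s_0]$, $\widetilde{f}(s_0-\delta) > f(b)$, and $\widetilde{f}'(s_0-\delta) < u'(s_0) \leq f'(b)$.

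Next I apply Lemma~\ref{lemma-gluing} with $n = 2$, $f_1 = f|_{[a,b]}$, $f_2 = \widetilde{f}|_{[s_0-\delta, s_0]}$, and $Q_1 = Q_2 = Q$. Hypothesis (1) holds by construction. Hypothesis (2) is exactly what was arranged in the previous step. Using the identity $\m_H^{CH}(\S) = (r/2)(1 + Q^2/r^2 - (r')^2)$ for a rotationally symmetric coordinate sphere of area-radius $r$, hypothesis (3) reduces to $f(b) > |Q|$ and $\m_H^{CH}(\S_b) > |Q|$, which are assumptions (ii) and (iv); hypothesis (4) reduces to $\widetilde{f}(s_0-\delta) > |Q|$ (which follows from $\widetilde{f}(s_0-\delta) > f(b) > |Q|$) and to the charged Hawking mass of $\S_{s_0-\delta}$ in $\widetilde{\g}$ exceeding $|Q|$, which holds since this quantity equals $m_e$ identically in unbent RN and remains close to $m_e$ after the $C^1$-small bending perturbation, while $m_e > |Q|$. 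The output of Lemma~\ref{lemma-gluing} is a smooth positive function $f$ on $[a, s_0]$ that coincides with the original $f$ on $[a, (a+b)/2]$, coincides with $\widetilde{f}$ on $[(s_0-\delta+s_0)/2, s_0]$, and satisfies $R(\g) > 2|E|^2_{\g}$ on the bridging region for $E \definedas (Q/f^2)\pr_s$.

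To finish, I extend $f$ to $[a, \infty)$ by setting $f = u$ on $[s_0, \infty)$; smoothness across $s_0$ is automatic because the reparameterization $\sigma$ constructed in the proof of Lemma~\ref{bending} agrees with the identity to infinite order at $s_0$. The resulting $(M, \g, E)$ is rotationally symmetric and asymptotically flat with ADM mass $m_e$, satisfies $R(\g) \geq 2|E|^2_{\g}$ throughout (equality holding only on the final RN piece), and has $E$ divergence-free by Lemma~\ref{lemma-DEC-rotsym}(a). Conclusions (I) and (II) then follow directly from the shape of $f$ at the two ends of $[a, \infty)$. For (III), if $f' > 0$ on $[a, b]$, the remark following Lemma~\ref{lemma-gluing} gives $f' > 0$ on the bridge, while $\widetilde{f}' > 0$ on the bent region and $u' > 0$ on the RN exterior, so $H = 2f'/f > 0$ on every $\S_s$, producing the mean-convex foliation that coincides with the RN coordinate spheres for $s \geq s_0$. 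The main difficulty in the argument is reconciling the slope condition $\widetilde{f}'(s_0-\delta) \leq f'(b)$ demanded by Lemma~\ref{lemma-gluing} with the rigid structure of RN; this is possible precisely because the gap $m_e - \m_H^{CH}(\S_b) > 0$ forces $u'$ at the appropriate RN scale strictly below $f'(b)$.
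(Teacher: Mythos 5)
Your argument is correct and follows essentially the same route as the paper: locate a gluing radius on the Reissner--Nordstr\"om profile using the gap $m_e - \m_H^{CH}(\S_b) > 0$, apply the bending lemma to break the saturated equality $R = 2|E|^2$, verify the hypotheses of Lemma~\ref{lemma-gluing} via the identity $\m_H^{CH}(\S_s) = \tfrac{f}{2}\bigl(1 + Q^2/f^2 - (f')^2\bigr)$, and glue; your continuity argument for $\m_H^{CH}(\S_{s_0-\delta})>|Q|$ replaces the paper's explicit expansion in $p_\delta$ but is equally valid. The one detail to tighten is that you should take $f_2=\widetilde f$ on $[s_0-\delta,\,s_0-\delta/2]$ rather than up to $s_0$ (as the paper does), since at $s_0$ the bent metric reverts to exact Reissner--Nordstr\"om where $R=2|E|^2$ holds with equality, so the strict inequality in hypothesis (1) of Lemma~\ref{lemma-gluing} would fail at that endpoint.
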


\begin{proof}
	Recall that the Reissner--Nordstr\"om metric of charge $Q$ and mass $m>|Q|$ can be written as
	\begin{equation}
	\g_{m,Q}=ds^2+u_{m,Q}(s)^2g_{*},
	\end{equation} 
where $s\in[0,\infty)$ and $u_{m,Q}\colon\left[0,\infty\right)\to [m+\sqrt{m^2-Q^2},\infty)$ satisfies
	\begin{itemize}
	\itemsep0.5em
		\item $u_{m,Q}(0)=m+\sqrt{m^2-Q^2}$ and $|Q| < m$,
		\item $u'_{m,Q}(s)=\sqrt{1-\frac{2m}{u_{m,Q}(s)}+\frac{Q^2}{u_{m,Q}(s)^2}}<1$, and 
		\item $u''_m(s)=\frac{mu_{m,Q}(s)-Q^2}{u_{m,Q}(s)^3}$.
	\end{itemize}  
According to~\eqref{eq-c-hawking}, the charged Hawking mass of $\Sigma_{b}=\{b\}\times \bS^2$ in $([a,b]\times \bS^2,\g_f,E_f)$ is
\begin{equation}
m_* \definedas \m_H^{CH}(\S_b)=\frac{f(b)}{2}\left(1+\frac{Q^2}{f(b)^2}-f'(b)^2\right)
\end{equation} 

Fix $m_e>m_*$. In order to apply Lemma~\ref{lemma-gluing} to glue $([a,b]\times \bS^2,\g_{f})$ to a Reissner--Nordstr\"om manifold of mass $m_e$ and charge $Q$, we require that $u_{m_e,Q}(s_{0})>f(b)$ and $u_{m_e,Q}'(s_0) \leq f'(b)$, for some $s_0 > 0$. Both are automatically satisfied for small $s_0>0$ if $f(b)< m_e+\sqrt{m_e^2-Q^2}=u_{m_e,Q}(0)$, since $f'(b)>0=u'_{m_e,Q}(0)$, by (ii) and~\eqref{mean-collar}. Consider now the case $f(b)>m_e+\sqrt{m_e^2-Q^2}$.  Since the range of $u_{m_e,Q}$ is $[m_e+\sqrt{m_e^2-Q^2},\infty)$, for given $\epsilon>0$ there is an $s_{\epsilon}>0$ such that $u_{m_e,Q}(s_{\epsilon})=f(b)+\epsilon$. Thus we have
\begin{equation}
\begin{split}
u'_{m_e,Q}(s_{\epsilon})^2&=1-\frac{2m_e}{f(b)}+\frac{Q^2}{f(b)^2}+O(\epsilon)\\
&=1-\frac{2m_e}{f(b)}-\frac{2m_*}{f(b)}+\frac{2m_*}{f(b)}+\frac{Q^2}{f(b)^2}+O(\epsilon)\\
&=f'(b)^2-\mu+O(\epsilon)
\end{split}
\end{equation}
where $\mu \definedas 2\frac{m_e-m_\ast}{f(b)}>0$, by the definition of $m_*$. Therefore, also in this case, for  a sufficiently small $\epsilon>0$, $0\leq u'_{m_e,Q}(s_{\epsilon})<f'(b)$. 
\newpage
We now perform the deformation procedure of Lemma~\ref{bending} to obtain $\widetilde{u}_{m_e,Q}$, so that we have $R(\widetilde{u}_{m_e,Q}) > 2|\widetilde{E}_{m_e,Q}|^2_{\widetilde{u}_{m_e,Q}}$ in a small region and $\widetilde{u}_{m_e} = u_{m_e}$ outside a compact set. Using the last part of Lemma~\ref{bending}, notice that for $\delta >0$ small we have $\widetilde{u}_{m_e}(s_{\epsilon}- \delta) > f(b)$ and $\widetilde{u}_{m_e}'(s_{\veps} -\delta) < f'(b)$, where $\widetilde{u}_{m_e} = u_{m_e}(\s(s))$ ($\s$ as in Lemma~\ref{bending} with $s_0 = s_{\epsilon}$). 

Since by definition $\widetilde{u}_{m_e}(s_{\epsilon} - \delta)>|Q|$, to apply Lemma~\ref{lemma-gluing}, it remains to check that $1+\frac{Q^2}{\widetilde{u}_{m_e}(s_{\epsilon} - \delta)^{2}}-\frac{2|Q|}{\widetilde{u}_{m_e}(s_{\epsilon} - \delta)}> \widetilde{u}_{m_e}'(s_{\epsilon} - \delta)^2$. As pointed out in Remark~\ref{mCH-cond}, this condition is equivalent to $m_H^{CH}(\Sigma_{s_{\epsilon} - \delta})>|Q|$. Compute $\dot{\sigma}(s_{\veps} - \delta)^2\asdefined 1+p_{\delta}$, where $p_{\delta}>0$ and $p_{\delta}  \to 0$ as $\delta \to 0$.
\begin{align*}
	&\m_{CH}(\Sigma_{s_{\epsilon} - \delta}) \\
	&=\frac{\widetilde{u}_{m_e,Q}(s_{\epsilon} - \delta)}{2}\left(1+\frac{Q^2}{\widetilde{u}_{m_e,Q}(s_{\epsilon} - \delta)^2}-\widetilde{u}_{m_e,Q}'(s_{\epsilon} - \delta)^2\right)\\
	&=\frac{\widetilde{u}_{m_e,Q}(s_{\epsilon} - \delta)}{2}\left(1+\frac{Q^2}{\widetilde{u}_{m_e,Q}(s_{\epsilon} - \delta)^2}-\left(1-\frac{2m_e}{\widetilde{u}_{m_e,Q}(s_{\epsilon} - \delta )}+\frac{Q^2}{\widetilde{u}_{m_e,Q}(s_{\epsilon} - \delta)^2}\right)\dot{\sigma}(s_{\epsilon} - \delta)^2\right)\\
	&=\frac{\widetilde{u}_{m_e,Q}(s_{\epsilon} - \delta)}{2}\left(1+\frac{Q^2}{\widetilde{u}_{m_e,Q}(s_{\epsilon} - \delta)^2}-\left(1-\frac{2m_e}{\widetilde{u}_{m_e,Q}(s_{\epsilon} - \delta )}+\frac{Q^2}{\widetilde{u}_{m_e,Q}(s_{\epsilon} - \delta)^2}\right)(1+p_{\delta})\right)\\
	&=-\frac{\widetilde{u}_{m_e,Q}(s_{\epsilon} - \delta)}{2}p_{\delta}+m_e(1+p_{\delta})-\frac{Q^2}{2\widetilde{u}_{m_e.,Q}(s_{\epsilon} - \delta)}p_{\delta}\\
	&=m_e+\frac{p_{\delta}}{2}\left(2m_e-\widetilde{u}_{m_e,Q}(s_{\epsilon} - \delta)-\frac{Q^2}{\widetilde{u}_{m_e,Q}(s_{\epsilon} - \delta)}\right)\\
	&>m_e+\frac{p_{\delta}}{2}\left(2m_e-\widetilde{u}_{m_e,Q}(s_{\epsilon} - \delta)-|Q|\right)\\
	&>m_e+\frac{p_{\delta}}{2}\left(m_e-\widetilde{u}_{m_e,Q}(s_{\epsilon} - \delta)\right) \\
	&>m_e+\frac{p_{\delta}}{2}\left(m_e-u_{m_e,Q}(s_{\epsilon})\right). 
\end{align*} 
Therefore, for sufficiently small $\delta>0$, since $m_e>m_*>|Q|$ and $p_{\delta}$ can be made arbitrarily small, we obtain
\begin{equation}
\m_H^{CH}(\Sigma_{s_{\epsilon} - \delta})>|Q|,
\end{equation}
as desired. We now apply Lemma~\ref{lemma-gluing} with $f_1=f$ on $[a,b]$ and $f_2=\widetilde{u}_{m_e,Q}$ on $[s_{\epsilon} - \delta, s_{\epsilon} - \frac{\delta}{2}]$.
	\end{proof}

\section{Extensions with minimal boundary} \label{sec-minimal}

Given an initial data set $(M,\g,E)$, using the stability equation as in~\cite{M-S}, together with the relation between the scalar curvature of $(M,\g)$ and the electric field $E$, if $\S \cong \bS^2$ is a stable horizon, we have
\begin{equation*}
\int_{\Sigma}\left(-\Lap_{g} \vphi^2  + K(g) \vphi^2\right)dA_g \geq \int_{\S} \frac{R(\g) + |A|^2}{2} \vphi^2  dA_g \geq \int_{\Sigma}|E|^2_\gamma \vphi^2dA_g
\end{equation*}
for all $\vphi \not\equiv 0$ smooth on $\bS^2$, hence  
\begin{equation} \label{stability}
\int_{\Sigma}\left(-\Lap_{g} \vphi^2  + K(g) \vphi^2\right)dA_g  \geq \int_{\Sigma}|E|^2_\gamma \vphi^2dA_g
\end{equation}
for all $\vphi \not\equiv 0$ smooth on $\bS^2$. By setting $\vphi \equiv 1$ and applying H\"older's inequality, we obtain the area-charge relation (c.f.~\cite{Gibbons})
\begin{equation} \label{area-charge}
4\pi \geq \frac{16 \pi^2 Q^2}{ |\S|},
\end{equation}
where we have used Gauss--Bonnet and the definition of the charge contained in $\S$ given in~\eqref{total-charge-surface}.

Recall that by the variational characterization of eigenvalues of $-\Lap_g + K(g)$, its first eigenvalue is given by
\begin{equation*}
\lambda_1 \definedas \inf \left\{ \dfrac{\int_{\S}( |\grad^{g} \vphi |^2_{g} + K(g)\vphi^2)\, dA_g}{\int_{\S} \vphi^2 \, dA_g} \, \bigg\vert \, \vphi \not\equiv  0\right\}.
\end{equation*}
Note that if $c \definedas \max\limits_{M} |E|^2_{\g}$, then $\lambda_1 \geq c$ implies the stability equation~\eqref{stability}. Thus we are naturally led to consider the set
\begin{equation}
\M^{\kappa^+} \definedas \{ g \,\,\textnormal{metric on $\S \cong \bS^2$} \, | \,  \lambda_1(-\Lap_g + K(g)) > \kappa \},
\end{equation}
and we have the following lemma.

\begin{lemma}\label{path-lambda}
Let $(\S \cong \bS^2,g)$ be a 2-dimensional Riemannian manifold with metric $g$ satisfying $\lambda_1(-\Lap_g + K(g) )> \kappa$. 
Then there exists a smooth path of metrics $\{ g(t) \}_{0 \leq t \leq 1}$ connecting $g$ to a round metric, such that $\lambda_1( -\Lap_{g(t)} + K(g(t)) ) > \kappa$ for all $0 \leq t \leq 1$.
\end{lemma}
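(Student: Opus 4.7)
My plan is to mimic the construction of Mantoulidis--Schoen~\cite[Lemma~1.2]{M-S}, which treats the case $\kappa=0$, tracking the threshold $\kappa$ throughout. First, I invoke the uniformization theorem for $\bS^2$ to write $g=\Phi^*(e^{2w}g_*)$ for some diffeomorphism $\Phi\colon\bS^2\to\bS^2$, smooth function $w\colon\bS^2\to\R$, and round reference metric $g_*$. Since pullback by $\Phi$ intertwines $-\Lap+K$ with itself, the spectrum is preserved, so I may replace $g$ by $e^{2w}g_*$ and retain the hypothesis $\lambda_1(-\Lap_g+K(g))>\kappa$.

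Next, I consider the conformal interpolation $g(t):=e^{2(1-t)w}g_*$ for $t\in[0,1]$, so that $g(0)=g$ and $g(1)=g_*$. Using the 2D conformal identities $K(e^{2\sigma}g_*)=e^{-2\sigma}(1-\Lap_{g_*}\sigma)$, $\Lap_{e^{2\sigma}g_*}=e^{-2\sigma}\Lap_{g_*}$, and $dA_{e^{2\sigma}g_*}=e^{2\sigma}dA_{g_*}$, together with the conformal invariance of the Dirichlet energy in dimension two, the Rayleigh quotient for $-\Lap_{g(t)}+K(g(t))$ can be expressed against the fixed background measure $dA_{g_*}$ as
$$\lambda_1(g(t))=\inf_{\vphi\not\equiv 0}\frac{\int_{\bS^2}\left(|\grad_{g_*}\vphi|_{g_*}^2+\left(1-(1-t)\Lap_{g_*}w\right)\vphi^2\right)dA_{g_*}}{\int_{\bS^2}\vphi^2\,e^{2(1-t)w}\,dA_{g_*}}.$$
The numerator is affine in $t$ and the denominator is log-convex in $t$; this is the structural feature I plan to exploit. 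If the round endpoint $g(1)=g_*$ does not itself satisfy $\lambda_1>\kappa$, I concatenate the interpolation with a preliminary conformal rescaling $c^2g_*$ with $c>0$ small, along which $\lambda_1(c^2g_*)=c^{-2}$ grows monotonically, reaching a round metric with arbitrarily large first eigenvalue; standard time-reparametrization stitches the two stages into a single smooth path.

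The main obstacle is verifying $\lambda_1(g(t))>\kappa$ throughout the conformal interpolation stage, for which continuity of $t\mapsto\lambda_1(g(t))$ alone is insufficient, since a generic continuous function can dip below $\kappa$ in the interior even if both endpoint values exceed $\kappa$. Following M--S, I plan to test the Rayleigh quotient above with the positive first eigenfunction $\vphi_g$ of $-\Lap_g+K(g)$ (and appropriate comparison eigenfunctions at intermediate $t$), exploiting the affine-plus-log-convex structure of the functional to control the infimum of $\lambda_1(g(t))$ in terms of $\lambda_1(g(0))=\lambda_1(g)$ and $\lambda_1(g(1))$, both of which exceed $\kappa$ by construction. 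This spectral control along the specific conformal path, rather than any generic continuity property, is the technical heart of the lemma and the principal difficulty inherited from the Mantoulidis--Schoen argument.
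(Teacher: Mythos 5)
Your setup coincides with the paper's: uniformize, interpolate conformally via $g(t)=e^{2(1-t)w}\times(\text{round})$, and rewrite the Rayleigh quotient over the fixed round background so that the numerator is affine in $t$ while the volume weight $e^{2(1-t)w}$ is convex in $t$. But the proposal stops exactly where the proof has to happen, and the plan you sketch for the "technical heart" points in the wrong direction: testing the quotient with the first eigenfunction of $g$ (or of any fixed metric) only produces \emph{upper} bounds on $\lambda_1(g(t))$, whereas the lemma requires a lower bound valid for \emph{every} test function simultaneously. The missing chain is short: writing $L(t)[\varphi]$ for the numerator and $D(t)[\varphi]=\int_\S\varphi^2e^{2(1-t)w}\,dA$ for the denominator, affinity in $t$ gives $L(t)=tL(1)+(1-t)L(0)\geq(\kappa+\delta)\bigl(tD(1)+(1-t)D(0)\bigr)$ for a suitable fixed $\delta>0$ coming from the endpoint eigenvalue bounds, and then the pointwise concavity of $h(t)=t+(1-t)e^{2w}-e^{2w(1-t)}$ (which vanishes at $t=0,1$ and has $h''\leq 0$) yields $tD(1)+(1-t)D(0)\geq D(t)$, hence $\lambda_1(g(t))\geq\kappa+\delta>\kappa$ for all $t$. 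Without this step the "affine-plus-convex structure" remains an observation rather than a proof.

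Second, your handling of the round endpoint would fail as stated. If $\lambda_1(-\Lap_{g_*}+K(g_*))=1\leq\kappa$, then your first stage already violates the conclusion at $t=1$, and appending a rescaling stage afterwards cannot repair a dip that has already occurred. The paper sidesteps this by normalizing the round reference from the start to $g_{r_o}=r_o^2g_*$ with $4\pi r_o^2=|\S|_g$: testing the Rayleigh quotient of $g$ with $\varphi\equiv1$ and Gauss--Bonnet gives $\kappa<\lambda_1(g)\leq r_o^{-2}=\lambda_1(-\Lap_{g_{r_o}}+K(g_{r_o}))$, so the endpoint condition holds automatically and no concatenation is needed. (A smaller round target $c^2g_*$ with $c^{-2}>\kappa$ would also work, but only if the interpolation is run directly to that target, not glued on after an interpolation ending at $g_*$.)
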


\begin{proof}
Let $r_o$ be the area radius of $(\S,g)$, that is $|\S|_{g}= 4\pi r_o^2$.  Recall that the first eigenvalue of the operator $-\Lap_g + K(g)$ can be computed via the Rayleigh quotient as
\begin{equation} \label{rayleigh-q}
\lambda_1  \definedas \inf\left\{ \frac{\int_{\S}( |\grad^{g} \vphi |^2_{g} + K(g)\vphi^2)\, dA_g}{\int \vphi^2 \, dA_{g}} \, \bigg\vert \, \vphi \not\equiv 0 \right\}.
\end{equation}
In particular, by setting $\vphi \equiv 1$, we obtain that $r_o^{-2} > \k$.

Now use the uniformization theorem to write $g= e^{2w} g_{r_o}$, where  $g_{r_o} \definedas r_o^2 g_{*}$ and $w$ is a smooth function. Let $g(t)=e^{2(1-t)w}g_{r_o}$ be a smooth path of metrics connecting $g$ to $g_{r_o}$, as in~\cite{M-S}.

Observe that by a direct computation
\begin{align*}
L(t) & \definedas \int_{\S}( |\grad^{g(t)} \vphi |^2_{g(t)} + K(g(t))\vphi^2)\, dA_{g(t)} \\
&=\int_{\S}( |\grad^{g_{r_o}} \vphi |^2_{g_{r_o}} + \(\frac{1}{r_o^2}-(1-t)\Lap_{g_{r_o}} w \)\vphi^2)\, dA_{g_{r_o}}.
\end{align*}
In particular $L(0) > \(\k + \delta\) \int_{\S} \vphi^2 \, e^{2w} dA_{g_{r_o}}$, for some  $\delta>0$, which can be picked sufficiently small so that $L(1) > (\k + \delta)  \int_{\S} \vphi^2 \, dA_{g_{r_o}}$, since $r_o^{-2} > \k > 0$. We can write $L(t)$ as 
\begin{equation*}
L(t) = tL(1) + (1-t)L(0) > t(\k + \delta)   \int_{\S} \vphi^2 \, dA_{g_{r_o}} + (1-t)\(\k + \delta\) \int_{\S} \vphi^2 \, e^{2w} dA_{g_{r_o}}.
\end{equation*}
In view of~\eqref{rayleigh-q}, to show that $\lambda_1(-\Lap_{g(t)} + K(g(t)) ) > \kappa$ it suffices to show that
\begin{equation*}
 t(\k + \delta)   \int_{\S} \vphi^2 \, dA_{g_{r_o}} + (1-t)\(\k + \delta\) \int_{\S} \vphi^2 \, e^{2w} dA_{g_{r_o}} > (\k + \delta) \int_{\S} \vphi^2 \, e^{2w(1-t)}dA_{g_{r_o}}.
\end{equation*}
This is equivalent to showing that the function
\begin{equation*}
h(t) \definedas t(\k + \delta) + (1-t)(\k + \delta)e^{2w} - (\k + \delta) e^{2w(1-t)}
\end{equation*}
is non-negative on $[0,1]$, but since $h(0) = 0 =h(1)$ and $h''(t) \leq 0$ on $[0,1]$, $h(t)$ must be positive on $(0,1)$, and the result follows.
\end{proof}

\begin{coro}\label{path-mod}
Given $(\S \cong \bS^2,g_o)$ with $\lambda_1 \definedas \lambda_1(-\Lap_{g_o}+K(g_o))>0$ and $\k>0$ such that $\lambda_1 > \k$, one can repeat the procedure of~\cite[Lemma 1.2]{M-S} to modify the path given by Lemma~\ref{path-lambda}, so that $\{ g(t) \}$ satisfies
\begin{enumerate}[(i)]
\itemsep0.5em
\item $g(0)=g_o$ and $g(1)$ is round,
\item $g'(t)=0$ for $t \in [\theta,1]$ for some $0 < \theta < 1$, and
\item $\tr_{g(t)}g'(t) = 0$ for $t \in [0,1]$.
\end{enumerate}

We note that a rescaling is necessary to achieve (i)-(iii). As a consequence, we cannot guarantee that the same lower bound $\kappa$ on $\lambda_{1}(-\Lap_{g(t)}+ K(g(t)))$ still holds along the modified path, since $\lambda_{1}(-\Lap_{g(t)}+ K(g(t)))$ along the modified path will depend on the exact values of $w$ arising in the proof of Lemma~\ref{path-lambda}. Thus, we need to define a new threshold
\begin{equation} \label{eq-lambda}
\kappa \definedas \inf_{[0,1]} \lambda_1(-\Lap_{g(t)}+ K(g(t)))
\end{equation}
which will be strictly positive, $\kappa>0$ by construction. In particular, we have that $g(t) \in \M^{\kappa^+}$ for all $t \in [0,1]$.
\end{coro}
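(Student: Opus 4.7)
The plan is to apply the procedure of \cite[Lemma 1.2]{M-S} to the path $\{h(t)\}_{0 \leq t \leq 1}$ constructed in Lemma \ref{path-lambda}, obtaining a modified path $\{g(t)\}$ satisfying conditions (i)--(iii), and then to use continuity of the first eigenvalue together with compactness of $[0,1]$ to deduce that the threshold defined by~\eqref{eq-lambda} is strictly positive.

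First, I would take $\{h(t)\}$ from Lemma \ref{path-lambda}, with $h(0) = g_{o}$, $h(1) = r_{o}^{2} g_{*}$, and $\lambda_1(-\Lap_{h(t)} + K(h(t))) > \kappa_{\mathrm{old}}$, where $\kappa_{\mathrm{old}}$ denotes the $\kappa$ appearing in the hypothesis. Following \cite[Lemma 1.2]{M-S}, I would then perform a smooth time reparametrization $t \mapsto \phi(t)$ with $\phi(0)=0$, $\phi$ strictly increasing on $[0, \theta]$, and $\phi \equiv 1$ on $[\theta,1]$, achieving condition (ii); and a smooth strictly positive pointwise rescaling $g(t) = c(t,\cdot)^{2}\, h(\phi(t))$ in the conformal class of $h(\phi(t))$, with $c$ chosen so that $dA_{g(t)}$ is independent of $t$, yielding $\tr_{g(t)} g'(t) = 0$ as in (iii). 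The boundary condition (i) is preserved by normalizing $c(0, \cdot) = c(1, \cdot) = 1$.

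The rescaling step generally changes the first eigenvalue of $-\Lap + K$, which is not conformally covariant in two dimensions, so the lower bound $\kappa_{\mathrm{old}}$ is not automatically inherited by the modified path; this is exactly the reason for introducing a new threshold. However, the map $t \mapsto \lambda_{1}(-\Lap_{g(t)}+ K(g(t)))$ is continuous on the compact interval $[0,1]$, by standard continuous dependence of eigenvalues of elliptic self-adjoint operators on smooth metrics (visible directly in the Rayleigh quotient~\eqref{rayleigh-q}), and strictly positive at each $t \in [0,1]$. Strict positivity at each $t$ can be checked by inserting the explicit form $g(t) = c(t,\cdot)^{2}\, h(\phi(t))$ into~\eqref{rayleigh-q}, using the conformal transformation rule $K(c^{2} h) = c^{-2}(K(h) - \Lap_{h} \log c)$, and invoking the bound $\lambda_{1}(-\Lap_{h(\phi(t))} + K(h(\phi(t)))) > \kappa_{\mathrm{old}}$ together with the smoothness and positivity of $c$; this is in the spirit of the convexity argument used at the end of the proof of Lemma \ref{path-lambda}. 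Since a continuous strictly positive function on $[0,1]$ attains a strictly positive minimum, the infimum $\kappa$ in~\eqref{eq-lambda} is strictly positive, and hence $g(t) \in \M^{\kappa^{+}}$ for every $t \in [0,1]$ by replacing $\kappa$ by a slightly smaller positive constant if necessary.

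The main obstacle is the pointwise positivity claim: while continuity and compactness are routine, verifying that $\lambda_{1}(-\Lap_{g(t)} + K(g(t))) > 0$ for every intermediate $t$ requires tracking the interaction of the conformal rescaling with the operator $-\Lap + K$. This is the step where one must really use the specific form of the M-S construction (in particular, the smoothness and uniform positivity of $c(t, \cdot)$) rather than treating the modified path as an arbitrary smooth family of metrics.
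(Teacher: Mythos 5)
Your final step --- continuity of $t \mapsto \lambda_{1}(-\Lap_{g(t)}+K(g(t)))$ on the compact interval $[0,1]$ together with pointwise strict positivity implies that the infimum in~\eqref{eq-lambda} is positive --- is exactly the intended justification for $\kappa>0$. The gap is in how you realize conditions (ii)--(iii). You describe the Mantoulidis--Schoen modification as a pointwise conformal rescaling $g(t)=c(t,\cdot)^{2}\,h(\phi(t))$ with $c$ chosen so that $dA_{g(t)}$ is independent of $t$. For the conformal path $h(t)=e^{2(1-t)w}g_{r_o}$ of Lemma~\ref{path-lambda} this is impossible without trivializing the path: requiring $c(t,\cdot)^{2}e^{2(1-\phi(t))w}\,dA_{g_{r_o}}$ to be $t$-independent forces $c(t,\cdot)^{2}=\mathrm{const}\cdot e^{2\phi(t)w}$, whence $g(t)=\mathrm{const}\cdot e^{2w}g_{r_o}$ is constant in $t$ and $g(1)$ is not round. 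Relatedly, your normalization $c(0,\cdot)=c(1,\cdot)=1$ is incompatible with $dA_{g(0)}=dA_{g(1)}$, since $dA_{g_o}=e^{2w}dA_{g_{r_o}}\neq dA_{g_{r_o}}$ unless $w\equiv 0$.

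The actual procedure of \cite[Lemma 1.2]{M-S} is different: after the time reparametrization, one rescales each metric by a constant (in the point on $\S$) factor so that all total areas equal $4\pi r_o^{2}$ --- this is the ``rescaling'' the corollary alludes to, and it fixes the endpoints since $|\S|_{h(0)}=|\S|_{h(1)}=4\pi r_o^{2}$ --- and then pulls back by a smooth $t$-dependent family of diffeomorphisms of $\S$, produced by Moser's argument, to equalize the area \emph{forms} and hence obtain $\tr_{g(t)}g'(t)=0$. Under a constant rescaling $g\mapsto c\,g$ one has $\lambda_{1}\mapsto c^{-1}\lambda_{1}$, and under diffeomorphism pullback $\lambda_{1}$ is invariant; so strict positivity of $\lambda_{1}$ at each $t$ along the modified path is immediate, while the explicit lower bound $\kappa$ from Lemma~\ref{path-lambda} is generally lost because the scaling constants depend on the areas $|\S|_{h(t)}$ and hence on $w$. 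In particular, no conformal transformation formula for $K$ and no analogue of the convexity argument of Lemma~\ref{path-lambda} is needed at this stage; the delicate conformal analysis lives entirely in Lemma~\ref{path-lambda}, and the positivity argument you sketch for a genuinely pointwise conformal factor would not go through in general.
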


Let $(\Sigma,g_o, H_o=0,Q_o)$ be charged Bartnik data and let $r_o$ be defined as the area radius $|\Sigma|_{g_o}=4\pi r_o^2$. Consider a path $\{g(t)\}$ as in Corollary~\ref{path-mod}. Then, as in~\cite{M-X}, we define
\begin{equation}
\alpha \definedas \frac{1}{4}\max_{[0,1]\times\Sigma}|g'(t)|^2_{g(t)}
\end{equation}
and
\begin{equation}
\beta \definedas r_o^2\min_{[0,1]\times\Sigma}K(g(t)).
\end{equation}
Observe that the area-charge relation~\eqref{area-charge} implies 
\begin{equation*}
Q_o^2 \leq r_o^2.
\end{equation*}

\newpage

The next theorem asserts the existence of asymptotically flat extensions with an electric field satisfying the dominant energy condition and such that the geometry at the boundary is prescribed and the ADM mass of the extension can be made arbitrarily close to the optimal value in the charged Riemannian Penrose inequality. We follow the main ideas from~\cite{M-S} combined with the gluing methods developed in Section~\ref{sec-gluing-tools}.

\begin{thm} \label{thm-minimal}\label{thm-main}
Let $(\Sigma\cong \bS^2,g_o, H_o=0,Q_o)$ be minimal charged Bartnik data satisfying $\lambda_1 \definedas \lambda_1\left(-\Delta_{g_o}+K(g_o)\right)> 0$,  where $\lambda_1\left(-\Delta_{g_o}+K(g_o)\right)$ denotes the first eigenvalue of the operator $-\Delta_{g_o}+K(g_o)$ on $\Sigma$, $K(g_o)$ is the Gaussian curvature of $g_o$  and let $|\Sigma|_{g_o}\asdefined 4\pi r_o^2$. Suppose that 
\begin{equation*}
Q_o^2< r_o^2
\end{equation*}
and assume furthermore that
\begin{equation*} \kappa > \frac{Q_o^2}{r_o^4},
\end{equation*}
where $\kappa$ is given by
\begin{equation*}
\kappa \definedas \inf_{[0,1]} \lambda_1(-\Lap_{g(t)}+ K(g(t)))
\end{equation*}
along a suitable path of metrics $\lbrace{g(t)\rbrace}_{0 \leq t \leq 1}$ as in Corollary~\ref{path-mod}. Then, for any 
	\begin{equation*}
	m>\m_H^{CH}(\Sigma\cong \bS^2,g_o, H_o=0,Q_o)=\sqrt{\frac{|\Sigma|_{g_o}}{16\pi}}+\sqrt{\frac{\pi}{|\Sigma|_{g_o}}}Q_o^2,
	\end{equation*}
there is an asymptotically flat, electrically charged Riemannian 3-manifold $(M,\gamma,E)$ with $R_{\gamma}\geq 2|E|_{\gamma}^2$, where $E$ is a divergence-free electric field of total charge $Q_o$, such that 
	\begin{enumerate}[(i)]
	\itemsep0.5em
		\item the boundary $\partial M$ is minimal and isometric to $(\Sigma,g_o)$,
		\item outside a compact set, $M$ coincides with the spatial Reissner-Nordstr\"om manifold with mass $m$ and charge $Q_o$, such that $m > |Q_o|$, 
		\item $M$ is foliated by mean convex spheres that eventually coincide with the coordinate spheres in the spatial Reissner-Nordstr\"om manifold, and
\item $E$ eventually coincides with the electric field of the spatial Reissner-Nordstr\"om manifold. 
	\end{enumerate}
	\end{thm}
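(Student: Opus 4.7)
The plan is to follow the Mantoulidis--Schoen strategy closely, modifying it to accommodate the electric field by using the tools set up in Sections~\ref{sec-collars} and~\ref{sec-gluing-tools}. The extension $M$ will consist of two pieces glued together: a collar extension over $[0,1] \times \S$ realizing the prescribed boundary geometry at $t=0$, together with a slightly bent and truncated sub-extremal Reissner--Nordstr\"om manifold of mass $m$ and charge $Q_o$ attached at $t=1$.

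First I would apply Corollary~\ref{path-mod} to choose a smooth path $\{g(t)\}_{0 \leq t \leq 1}$ of metrics on $\S$ connecting $g_o$ to $r_o^2 g_\ast$, with preserved area form, $g'(t) \equiv 0$ near $t=1$, and such that $\lambda_1(-\Lap_{g(t)} + K(g(t))) \geq \kappa$ for every $t$. On $[0,1] \times \S$ I consider a collar of the form
\begin{equation*}
\g = v(t,\cdot)^2 dt^2 + F(t)^2 g(t),
\end{equation*}
equipped with the vector field $E = Q_o (r_o^2 v F^2)^{-1} \partial_t$ from Lemma~\ref{lemma-e-field-collar}, which is automatically divergence-free with total charge $Q_o$ on every level set. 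To match $(\S,g_o)$ isometrically at the inner boundary and force it to be minimal, I impose $F(0) = 1$ and $F'(0) = 0$; for the outer gluing I want $F$ strictly increasing on $(0,1]$ so that $\{1\} \times \S$ is a round sphere of positive mean curvature.

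Second, the main analytic step is to choose $v(t,\cdot)$ and $F$ so that the dominant energy condition $R(\g) \geq 2 |E|^2_\g = 2 Q_o^2/(r_o^4 v^2 F^4)$ holds on the collar. Using the scalar curvature formula~\eqref{scalar-collar} and the fact that $R(F^2 g(t)) = 2 F^{-2} K(g(t))$, the DEC rearranges to an inequality whose leading part, after testing against $v$ and integrating over $(\S,g(t))$, is controlled by
\begin{equation*}
\lambda_1\bigl(-\Lap_{g(t)} + K(g(t))\bigr) - \frac{Q_o^2}{r_o^4 F(t)^2} \geq 0,
\end{equation*}
with the remainder driven by terms involving $F', F''$ and $|g'(t)|^2_{g(t)}$, i.e.\ the parameters $\alpha$ and $\beta$ associated to the path. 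The hypothesis $\kappa > Q_o^2/r_o^4$ gives a strictly positive budget in this leading inequality, and by rescaling the $t$ interval (making the path slow) and choosing $F$ with $|F - 1|$ and $|F'|$ sufficiently small I can make the $F', F''$ contributions negligible compared to the budget. Concretely I would take $v(t,\cdot) = 1 + \tau(t) \phi_t$ with $\phi_t$ a positive first eigenfunction of $-\Lap_{g(t)} + K(g(t))$, or reduce via separation of variables to a scalar ODE for $v = v(t)$ as in~\cite{M-S,CCMM}, and solve so that $R(\g) > 2|E|^2_\g$ strictly on $[0,1] \times \S$. Throughout, the mean curvature of a $t$-slice $\frac{2F'(t)}{v F(t)}$ remains non-negative and is positive for $t > 0$.

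Third, I would verify the four hypotheses of Proposition~\ref{glue-collar-RN} at the level $t=1$, where $\g$ restricts to $ds^2 + (r_o F(t))^2 g_\ast$ after reparametrizing by arclength. Hypothesis (i) is the strict DEC obtained above; (ii) $r_o F(1) > |Q_o|$ is immediate from $F(1) \geq 1$ and $Q_o^2 < r_o^2$; (iii) is positivity of the mean curvature; and (iv) $\m_H^{CH}(\S_1) > |Q_o|$ can be arranged to be as close to $\m_H^{CH}(\S, g_o, 0, Q_o)$ as desired by keeping the collar close to the round cylinder, so in particular strictly greater than $|Q_o|$ since $m_H^{CH}(\S, g_o, 0, Q_o) > |Q_o|$ follows from $Q_o^2 < r_o^2$. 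Given $m > \m_H^{CH}(\S, g_o, 0, Q_o)$, I can arrange $\m_H^{CH}(\S_1) < m$ as well. Proposition~\ref{glue-collar-RN} then provides a smooth extension of the collar to an asymptotically flat electrically charged manifold of mass $m$ and charge $Q_o$, coinciding with a sub-extremal Reissner--Nordstr\"om end outside a compact set and foliated by mean-convex spheres (by the remark following Lemma~\ref{lemma-gluing}, since $F' > 0$ on the gluing region and the bending deformation preserves monotonicity of the profile function). The concatenation with the collar yields the desired $(M,\g,E)$.

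The main obstacle is the collar construction in the second step: in the uncharged Mantoulidis--Schoen setup one only needs the \emph{positivity} of $\lambda_1$, but here the DEC forces one to beat the pointwise lower bound $2|E|^2_\g$, and no qualitative argument suffices. The quantitative gap $\kappa > Q_o^2/r_o^4$ is tailored precisely to this obstruction: it provides the slack needed to absorb the electric contribution while leaving enough room to control the geometric error terms determined by $\alpha$ and $\beta$ along the path by making the collar long and nearly cylindrical.
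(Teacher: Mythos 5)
Your proposal follows essentially the same route as the paper's proof: a collar $v(t,\cdot)^2\,dt^2+F(t)^2g(t)$ over the path of Corollary~\ref{path-mod} with lapse proportional to the first eigenfunction of $-\Lap_{g(t)}+K(g(t))$ (the paper takes $v=Au(t,\cdot)$ with $A$ large and $F(t)=(1+\epsilon t^2)^{1/2}$, which is your ``long, nearly cylindrical'' collar), the electric field of Lemma~\ref{lemma-e-field-collar}, the gap $\kappa>Q_o^2/r_o^4$ absorbing the $2|E|^2_{\g}$ term, and Proposition~\ref{glue-collar-RN} for the gluing to a sub-extremal Reissner--Nordstr\"om end. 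One caution: the dominant energy condition must be verified pointwise, not after integrating against $v$, so only your eigenfunction-lapse option is viable --- the alternative reduction to a scalar ODE for $v=v(t)$ fails for general $g_o$ with $K(g(t))$ negative somewhere, since then the term $R(F^2g(t))$ in~\eqref{scalar-collar} has no sign and cannot be compensated.
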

\begin{proof}
Let $\{g(t)\}_{0 \leq t \leq 1}$ be a smooth path of metrics connecting $g_o$ to a round metric as described in Corollary~\ref{path-mod}. For each $t\in[0,1]$, let $u(t,x)>0$ be a smooth eigenfunction on $\Sigma$ corresponding to first eigenvalue $\lambda(t) \definedas \lambda_1(t)$ of $-\Delta_{g(t)}+K(g(t))$, normalized to have $L^2$-norm equal to 1. It can be checked that this choice of $u$ is smooth (see~\cite{M-S}). Let $0<\epsilon<1$ and define the collar extension
	 \begin{equation}
	 \gamma_c \definedas A^2u(t,\cdot)^2dt^2+(1+\epsilon t^2)g(t),
	 \end{equation}
where $A>0$ is a constant to be determined and $0 < \epsilon \ll 1$. By~\eqref{scalar-collar}, the scalar curvature of $\g_c$ is given by
 \begin{equation}
\begin{split}
 R(\gamma_c)&=2v(t,\cdot)^{-1}\left(-\Delta_{F(t)^2g(t)}v(t,\cdot)+K(F(t)^2g(t))v(t,\cdot)\right)\\
&\quad+v(t,\cdot)^{-2}\left(\frac{-2F'(t)^2-4F(t)F''(t)}{F(t)^2}-\frac{1}{4}|g'(t)|^2_{g(t)}+4\partial_t\log v(t,\cdot)\partial_t\log F(t)\right),
\end{split}
\end{equation}
for $F(t)=(1+\epsilon t^2)^{1/2}$ and $v(t,\cdot)=Au(t,\cdot)$. Consider the electric field $E \definedas \frac{Q_o}{f(t,\cdot)^2}\partial_t$, where $f(t,\cdot)^2 \definedas Ar_o^2u(t,\cdot)F(t)^2$, then by Lemma~\ref{lemma-e-field-collar},
	 \begin{equation}
	\text{div}_{\gamma_c}E=0,
	 \end{equation}
and the total charge contained in $\S_t$ (given by~\eqref{total-charge-surface}), is equal to $Q_o$.
	
Using the definition of $F$,$v$, and the fact 
\begin{equation}
-\Delta_{F(t)^2g(t)}(A u(t,\cdot))+\frac{1}{2}R(F(t)^2g(t))Au(t,\cdot)=F(t)^{-2}\lambda(t)A u(t,\cdot),
\end{equation} 
we have
	 \begin{align*}
	R(\gamma)&-2|E|_{\gamma}^2\\
	&=2F(t)^{-2}\lambda(t)\\
	&\quad+\frac{u(t,\cdot)^{-2}}{A^{2}}\left(\frac{-2F'(t)^2-4F(t)F''(t)}{F(t)^2}-\frac{1}{4}|g'(t)|^2_{g(t)}+4\frac{\pr_t v(t,\cdot)}{v(t,\cdot)}\frac{\pr_t F(t)}{F(t)}\right)-\frac{2Q_o^2A^2u^2(t,\cdot)}{f(t,\cdot)^4}\\
	&=\frac{2u(t,\cdot)^{-2}}{A^{2}F(t)^{2}}\bigg[A^2u(t,\cdot)^2\left(\lambda(t)-\frac{Q_o^2}{r_o^4F(t)^2}\right)-\epsilon-\frac{\epsilon}{F(t)^2}
	-\frac{1}{8}|g'(t)|^2_{g(t)}F(t)^2+2\epsilon t \frac{\pr_t u(t,\cdot)}{u(t,\cdot)}\bigg]\\
	&\geq \frac{2u(t)^{-2}}{A^{2}F(t)^{2}}\bigg[A^2\inf_{[0,1]\times\Sigma}u^2  \left(\lambda(t) -\frac{Q_o^2}{r_o^4}\right)-2-\alpha-2\sup_{[0,1]\times\Sigma}|\partial_t\log u|\bigg].
	 \end{align*}
Since $u(t,\cdot)>0$ and $\lambda(t) -\frac{Q_o^2}{r_o^4}>0$ for all $t\in[0,1]$, we have $\inf\limits_{[0,1]\times\Sigma}u^2  \left(\lambda(t) -\frac{Q^2_o}{r_o^4}\right)>0$. Now we choose $A>0$ such that
	 \begin{equation}
	 A^2\inf_{[0,1]\times\Sigma}u^2  \left(\lambda(t) -\frac{Q_o^2}{r_o^4}\right)-2-\alpha-2\sup_{[0,1]\times\Sigma}|\partial_t\log u|>0.
	 \end{equation} 
Therefore,
\begin{equation} \label{cond-g-1}
R(\gamma_c)> 2|E|_{\gamma_c}.
\end{equation}

Using~\eqref{mean-collar}, the mean curvature of $\S_t=\{ t \} \times \S$ is given by  
\begin{equation} \label{cond-g-2}
 H(t)=\frac{2F'(t)}{Au(t,\cdot) F(t)}\geq 0,
 \end{equation} 
and in particular, $H(0)=0$.
	 
By construction of the path $\{ g(t)\}_{0 \leq t \leq 1}$, we have $g(t)=r_o^2g_*$ for $t\in[\theta,1]$, which implies that $u(t,\cdot)=u(1,\cdot)\asdefined u(1)$ is a positive constant. Hence, the charged Hawking mass of $\Sigma_t$ for $\theta < t <1$ is given by
	\begin{equation}
\begin{split}
	\m_H^{CH}(\S_t)&=\sqrt{\frac{|\Sigma_t|}{16\pi}}\left(1+\frac{4\pi Q_o^2}{|\S_t|}-\frac{1}{16\pi}\int_{\S_t}H(t)^2 \, d\s_t\right)\\
	&=\frac{F(t)r_o}{2}\left(1+\frac{Q_o^2}{F(t)^2r_o^2}-\frac{r_o^2F'(t)^2}{A^2u(t,\cdot)^2}\right)\\
	&=\frac{(1+\epsilon t^2)^{1/2}r_o}{2}\left(1+\frac{Q_o^2}{(1+\epsilon t^2)r_o^2}-\frac{r_o^2\epsilon^2t^2}{A^2u(1)^2(1+\epsilon t^2)}\right).\\
\end{split}
	\end{equation}
In particular, for any  $0<\frac{\epsilon^{3/2}}{1+\epsilon}<\frac{A^2u(1)^2}{r_o^2}\left(1+\frac{Q^2_o}{r_o^2}\right)$, we have
	\begin{equation}
	\begin{split}
\m_H^{CH}(\S_1)&\leq 	\frac{(1+\sqrt{\epsilon})r_o}{2}\left(1+\frac{Q_o^2}{r_o^2}-\frac{r_o^2\epsilon^2}{A^2u(1)^2(1+\epsilon)}\right)\\
	&=\frac{r_o}{2}\left(1+\frac{Q_o^2}{r_o^2}\right)+\frac{\sqrt{\epsilon}r_o}{2}\left(1+\frac{ Q_o^2}{r_o^2}\right)-\frac{(1+\sqrt{\epsilon})r_o^3\epsilon^2}{2A^2u(1)^2(1+\epsilon)}\\
	&\leq \frac{r_o}{2}\left(1+\frac{Q_o^2}{r_o^2}\right)+\frac{\sqrt{\epsilon}r_o}{2}\left(1+\frac{ Q_o^2}{r_o^2}\right)-\frac{r_o^3\epsilon^2}{2A^2u(1)^2(1+\epsilon)}\\
	&\leq \m_H^{CH}(\S_0)+\sqrt{\epsilon} C,
	\end{split}
	\end{equation} 
where $C>0$ is a constant depending on $r_o$, $A$, $Q_o$, and $u(1)$.  Therefore for a sufficiently small $\epsilon$, 
\begin{equation}\label{m-minimal}
m>\m_H^{CH}(\Sigma,g_0, H_o=0,Q_o)+\sqrt{\epsilon} C\geq \m_H^{CH}(\Sigma_1).
\end{equation} 
	
In addition, we have 
\begin{align*}
\m_H^{CH}(\S_1) -\m_H^{CH}(\S_0)=\frac{r_o}{2}\left(\sqrt{1+\epsilon}-1+\frac{Q_o^2}{r_o^2\sqrt{1+\epsilon}}-\frac{Q_o^2}{r_o^2}-\frac{\epsilon^2 r_o^2}{A^2u(1)^2\sqrt{1+\epsilon}}\right).
\end{align*}It is straightforward to check that the right-hand side is positive by taking $\epsilon$ sufficiently small, since $Q^2<r_o^2$. Using the fact that $\m_H^{CH}(\S_0) > |Q_o|$, we obtain
\begin{equation} \label{cond-g-3}
\m_H^{CH}(\S_1) > |Q_o|.
\end{equation}

To apply Proposition~\ref{glue-collar-RN}, perform the change of variables $s(t)=Au(1)t$, then the metric for $s\in[Au(1)\theta,Au(1)]$ is
	\begin{equation} \label{collar-to-rot}
	\gamma_c=ds^2+f(s)^2g_{\ast},\qquad f(s)^2=r_o^2\left(1+\frac{\epsilon}{A^2u(1)^2}s^2\right).
	\end{equation} 
Notice that~\eqref{cond-g-1},~\eqref{cond-g-2} and~\eqref{cond-g-3} are precisely the conditions needed in Proposition~\ref{glue-collar-RN}, and by our choice $m$ (see~\eqref{m-minimal}), we can readily apply Proposition~\ref{glue-collar-RN} to construct the desired asymptotically flat, electrically charged Riemannian manifold $(M,\gamma,E)$, with mass $m$ and total charge $Q_o$, with scalar curvature satisfying $R(\gamma)>2|E|_{\gamma}$. Moreover, it has the desired boundary geometry. Since $F'(t)>0$ for $t \in (0,1)$, it is foliated by mean convex spheres which eventually will coincide with coordinate spheres of the spatial Reissner--Nordstr\"om manifold. The electric field along this manifold also eventually coincides with the electric field of the spatial Reissner--Nordstr\"om manifold. This completes the proof.
\end{proof}

\vfill

\bibliographystyle{amsplain}
\bibliography{CBH-final}
\vfill

\end{document}